\theoremstyle{plain}
\newtheorem{theorem}{Theorem}
\numberwithin{theorem}{section}
\newtheorem{definition}[theorem]{Definition}
\newtheorem{proposition}[theorem]{Proposition}
\newtheorem{lemma}[theorem]{Lemma}
\newtheorem{corollary}[theorem]{Corollary}
\numberwithin{equation}{section}
\newcommand{\cblue}[1]{{\textcolor{black}{#1}}} 
\newcommand{\ccblue}[1]{{\textcolor{black}{#1}}}
\newcommand{\ccred}[1]{{\textcolor{black}{#1}}}
\newcommand{\cgreen}[1]{{\textcolor{black}{#1}}}
\newcommand{\ccgreen}[1]{{\textcolor{black}{#1}}}
\title{D-critical locus structure on the Hilbert schemes of some local toric Calabi-Yau threefolds}
\author{Sheldon Katz}
\address{Department of Mathematics\\University of Illinois at Urbana-Champaign\\Urbana IL 61801 USA}
\email{katzs@illinois.edu}
\author{Yun Shi}
\address{Department of Mathematics\\Brandeis University\\Waltham MA 02453 USA}
\email{yunshi@brandeis.edu}
\begin{document}
\maketitle{}

\begin{abstract}
The notion of a d-critical locus is an ingredient in the definition of motivic Donaldson-Thomas invariants by \cite{BJM19}. In this paper we show that there is a d-critical locus structure on the Hilbert scheme of dimension zero subschemes on some local toric Calabi-Yau 3-folds. We also show that using this d-critical locus structure and a choice of orientation data, the resulting motivic invariants agree with the definition given by the previous work of \cite{BBS13}.

{\bf Keywords:} Motivic Donaldson-Thomas Theory, D-critical locus structure 

{\bf Subject:} 14N35
\end{abstract}

\section{Introduction}

Donaldson-Thomas (DT) theory was introduced in \cite{Tho00} as an enumerative theory which gives a virtual count of stable coherent sheaves with fixed topological invariants on certain 3-folds, including Calabi-Yau threefolds.  In the Calabi-Yau case, this moduli problem supports a perfect obstruction theory of virtual dimension zero in good situations, and so defines a degree zero class in the Chow ring of the moduli space --- the virtual fundamental class of \cite{BF97}.  \ccblue{When the moduli space is proper,} the DT invariant is defined as the degree of this virtual fundamental class.  This theory was applied to Hilbert schemes of points in \cite{Tho00} by identifying the Hilbert scheme with the moduli space of their corresponding ideal sheaves. 

It turns out that there is a rich structure underlying the DT invariant. The DT moduli space supports a symmetric obstruction theory, which implies that \ccblue{when the moduli space is proper} the DT invariant is a weighted Euler characteristic of the moduli space, the weighting being given by a constructible function on the moduli space called the Behrend function \cite{Beh09}.  These ideas are applied to show that for a projective threefold $X$, the generating function of the DT invariants of $\operatorname{Hilb}^n(X)$ can be expressed in terms of the MacMahon function and the Euler characteristic of $X$ \cite{BF08, LP09, MNOP06}.
 
If the moduli space can be globally realized as a degeneracy locus, the value of its Behrend function at a point can be expressed in terms of the Euler characteristic of the Milnor fiber at that point \cite{Beh09,PP98}.  Replacing the Milnor fibers with their motivic incarnations, one is led to the notion of Motivic DT invariants. 

Motivic DT theory was introduced and developed in \cite{KS,BJM19}, and had been studied by several authors, see e.g.\ \cite{BBS13,Dav,MMNS12,Nag}. One of the first computations of motivic DT invariants was given for $\operatorname{Hilb}^n(X)$, where $X$ is a Calabi-Yau threefold  \cite{BBS13}. This computation relied on the fact that $\operatorname{Hilb}^n(\mathbb{C}^3)$ can be realized as a degeneracy locus with an equivariant torus action. The authors introduced a notion of motivic DT invariants of $\operatorname{Hilb}^n(X)$ based on their computations for $\operatorname{Hilb}^n(\mathbb{C}^3)$.

Later, a general formalism for motivic DT invariants was given in \cite{BJM19}, using the notions of a d-critical locus and orientation  introduced in \cite{Joy15}. The notion of orientation is extracted from the notion of orientation data introduced in \cite{KS}.   

Now let $X$ be a local toric surface $\omega_S$, the total space of the canonical bundle of a smooth, complete toric surface $S$.
In this paper, we define a d-critical locus structure on $\operatorname{Hilb}^n(X)$ if $X=\omega_{\mathbb{P}^2}$ or $X=\omega_{\mathbb{F}_n}$, and endow it with an orientation following \ccblue{\cite{Dav}}.  We then show that the motivic DT invariants of $\operatorname{Hilb}^n(X)$ as defined using the d-critical locus and orientation agree with the motivic DT invariants as defined in \cite{BBS13}. We therefore show that the results of \cite{BBS13} are valid in the context of the more general theory for these Calabi-Yau threefolds.

A natural question arises which we do not address further in this paper:


\smallskip\noindent
The notion of a d-critical locus was introduced as a classical truncation of the notion of a $-1$-shifted symplectic structure in derived algebraic geometry.  It was shown in \cite{PTVV13} that the derived moduli stack of sheaves on a \ccblue{projective} Calabi-Yau threefold has a canonical $-1$-shifted symplectic structure. \ccblue{For a similar result in the noncompact case, see \cite{BD19}.} Does the classical truncation of the canonical $-1$-shifted symplectic structure on the derived Hilbert scheme agree with our d-critical locus structure? This would imply that our motivic result \ccgreen{Corollary~\ref{cor:toricBBS} can be extended to} the total space of the canonical bundle of any smooth toric surface.\footnote{\cblue{While this paper was under review, we have  answered this question in the affirmative \cite{KS22}}.}

\subsection{Outline of the paper}
In Section 2 we review background material including the definition of motivic DT invariants following \cite{BJM19}.  Section 3 begins with the \ccred{necessary} toric geometry.  We then prove that there is a canonical  d-critical locus structure on an open subset of $\operatorname{Hilb}^n(\omega_S)$ for any smooth toric surface $S$ (Proposition~\ref{prop:opendcrit}).  We use this result and its proof to show that there is a natural d-critical locus structure on all of $\operatorname{Hilb}^n(\omega_S)$ induced by copies of $\operatorname{Hilb}^n(\mathbb{C}^3)$'s, for $S=\mathbb{P}^2$ and $S=\mathbb{F}_n$ (Theorem~\ref{thm:dcrit}). In Section 4, we exhibit an orientation on $\operatorname{Hilb}^n(\omega_S)$ associated to this d-critical structure. Finally in Section 5, using the d-critical locus structure constructed in Section 3 and the orientation in Section 4, we recover the computation of $\operatorname{Hilb}^n(\omega_S)$ in \cite{BBS13}.

\subsection{Notations}
All schemes in this paper are assumed to be separated and of finite type over $\mathbb{C}$. We use $X$ to denote a smooth quasi-projective Calabi-Yau 3-fold. For a smooth surface $S$, we denote the total space of its canonical bundle by $\omega_S$, and the projection from $\omega_S$ to $S$ by $\pi: \omega_S\rightarrow S$.

\section{Background}

\subsection{Introduction to motivic DT invariants}

In this section, we review the main ingredients of motivic Donaldson-Thomas (DT) invariants as defined by Bussi, Joyce and Meinhardt \cite{BJM19}. 

First we recall the definition of the monodromic Grothendieck ring, where the motivic invariant associated to a moduli space lives. Let $S$ be a separated scheme of finite type over $\mathbb{C}$. The Grothendieck group of $S$-schemes is the abelian group generated by isomorphism classes of $S$-schemes $[X\rightarrow S]$, modulo the scissor relations:
\begin{equation*}
[T\to S]=[T'\to S]+[T\textbackslash T'\to S]
\end{equation*}
for $T'$ a closed $S$-subscheme of $T$.

Now we consider an equivariant version of the Grothendieck ring. Let $\mu_n$ be the group of $n$th roots of unity. A $\mu_n$ action on a scheme $T$ is called \emph{good} if every $\mu_n$ orbit is contained in an affine subscheme of $T$. Note that there are maps
$\mu_m\rightarrow \mu_n$ if $n|m$,
defined by $x\mapsto x^{m/n}$ forming an inverse system. 
Denote $\varprojlim \mu_n$ by $\hat{\mu}$. 
\begin{definition} (\ccblue{\cite{DF00, BJM19}}) \label{def:grothendieck}
The monodromic Grothendieck group $K_0^{\hat{\mu}}(S)$ is the abelian group generated by isomorphism classes $[T\to S, \hat{\sigma}]$, where $T$ is a separated $S$-scheme of finite type and 
\begin{equation*}
\hat{\sigma}:\hat{\mu}\times T\rightarrow T
\end{equation*} is a $\hat{\mu}$ action which factors through a good $\mu_n$ action \ccblue{for some $n$}. The relations of $K_0^{\hat{\mu}}(S)$ are given by

(i) $[T_1, \hat{\sigma_1}]= [T_2, \hat{\sigma_2}]$ for $T_1$ and $T_2$ equivariantly isomorphic as $S$-schemes,

(ii) $[T, \hat{\sigma}]= [T', \hat{\sigma}|_{T'}]+[T\setminus T', \hat{\sigma}|_{T\setminus T',}]$ for any $T'\subset T$ closed $\hat{\sigma}$ invariant $S$-subscheme,

(iii) $[T\times \mathbb{A}^n, \hat{\sigma}\times\hat{\tau}_1]= [T\times \mathbb{A}^n, \hat{\sigma}\times\hat{\tau}_2]$ for any linear $\hat{\mu}$ actions $\hat{\tau}_1$, $\hat{\tau}_2$ on $\mathbb{A}^n$.  
\end{definition}

Note that there is an obvious commutative ring structure on $K_0^{\hat{\mu}}(S)$ defined by 
\begin{equation*}
[T_1, \hat{\sigma}]\cdot[T_2, \hat{\tau}]=[T_1\times_S T_2, \hat{\sigma}\times \hat{\tau}].
\end{equation*} 
We denote by $\mathbb{L}$ the element $[\mathbb{A}^1\times S\rightarrow S,\hat{\iota}]$, where $\hat{\iota}$ is the trivial action. Then we obtain a ring
\begin{equation*}
\mathcal{M}^{\hat{\mu}}_S:=K_0^{\hat{\mu}}(S)[\mathbb{L}^{-1}]
\end{equation*}
by formally joining the inverse of $\mathbb{L}$ under this multiplication. 
\ccgreen{We can similarly define $\mathcal{M}_S$  as the abelian group generated by classes $[T\to S]$ of $S$-schemes  of finite type, modulo the relations analogous to (i) and (ii) of Definition~\ref{def:grothendieck}.}
\ccblue{ Note that there is a map $\pi_{\hat{\mu}}: \mathcal{M}^{\hat{\mu}}_S\rightarrow \mathcal{M}_S$ of $\mathcal{M}_S$\ccgreen{- modules}, defined by taking generators to the corresponding orbit space: 
\begin{equation*}
\pi_{\hat{\mu}}: [T\rightarrow S, \hat{\sigma}]\rightarrow [T/{\hat{\sigma}}].
\end{equation*}
Let $f:S_1\rightarrow S_2$ be a morphism between $\mathbb{C}$-schemes. The notions of pushforward and pullback on $K_0(S)$ are defined by $f_*([T\rightarrow S_1])=[T\rightarrow S_2]$ by composing the map $T\rightarrow S_1$ with the map $f$, and $f^*[T'\rightarrow S_2]=[T'\times_{S_2}S_1\rightarrow S_1]$. \ccgreen{These maps} induce a pushforward and a pullback on $\mathcal{M}^{\hat{\mu}}_S$ in the obvious way.}

There is a less obvious product on $K^{\hat{\mu}}_0(S)$ or $\mathcal{M}^{\hat{\mu}}_S$, called the convolution product. It is denoted by '$\odot$' in \cite{BJM19}. Since we will not explicitly use it in this paper, we refer to \cite{Loo02}, \cite{DF99} and \cite{BJM19} for its definition.

The motivic DT invariants we consider here take values in a quotient of $(\mathcal{M}^{\hat{\mu}}_S, \odot)$ denoted by $(\overline{\mathcal{M}}^{\hat{\mu}}_S, \odot)$. To define it, we first write
\begin{equation*}
\mathbb{L}^{1/2}=[S,\hat{\iota}]-[S\times\mu_2,\mu_2]
\end{equation*}
with the natural $\mu_2$ action.  The notation is justified since the square of the right hand side (using $\odot$) can be checked to be $\mathbb{L}$.  It follows immediately that $\mathbb{L}^{-1/2}\in \mathcal{M}^{\hat{\mu}}_S$ as well.  Then to a
principal $\mathbb{Z}_2$\ccgreen{-bundle} $P$ over a scheme $T$ we associate a motive:
\begin{equation*}
\Upsilon(P)=\mathbb{L}^{-1/2}\odot([T, \hat{\iota}]-[P, \hat{\rho}])\in \mathcal{M}^{\hat{\mu}}_T \hspace{1mm},
\end{equation*} 
where $\hat{\rho}$ is induced by the $\mathbb{Z}_2$ action on $P$. Then $(\overline{\mathcal{M}}^{\hat{\mu}}_S, \odot)$ is obtained from $(\mathcal{M}^{\hat{\mu}}_S, \odot)$ by taking the quotient by the ideal generated by elements $\phi_*(\Upsilon(P\otimes_{\mathbb{Z}_2}Q)-\Upsilon(P)\odot\Upsilon(Q))$, for all $\mathbb{C}$\ccgreen{-scheme} morphisms $\phi: Z\rightarrow S$ and $P$, $Q$ principal $\mathbb{Z}_2$\ccgreen{-bundles} on $Z$.  Note that if $P$ is a trivial $\mathbb{Z}_2$\ccgreen{-bundle}, then $\Upsilon(P)=1$ is trivial as well.

\bigskip 

Now we review the definition of motivic DT invariants. First consider the case of a moduli space $Y$ which can be globally realized as a degeneracy locus, i.e. $Y:=\{df=0\}\subset U$ where $f:U\to \mathbb{C}$ is a regular function on a smooth scheme $U$. Then the motivic DT invariant can be defined as the motivic vanishing cycle \ccblue{up to a factor of  a power of $\mathbb{L}^{1/2}$}. We will not work with motivic vanishing cycles directly in this paper, and we simply refer to \cite{Loo02}, \cite{DF99}, \cite{BJM19} for their precise definition. 

More generally, the motivic invariants can still be defined if the moduli space can be covered by degeneracy loci which satisfy a compatibility condition. The precise \ccblue{terminology} for this structure is called a \emph{d-critical locus} structure \cite{Joy15}. 
Let $Y$ be a $\mathbb{C}$\ccgreen{-scheme} locally of finite type. The following theorem is from \cite{Joy15}:

\begin{theorem} (\cite{Joy15}, Theorem 2.1)
	\label{thm1}
There exists a sheaf $\mathcal{S}_Y$ of $\mathbb{C}$ vector spaces, uniquely characterized by two properties. 

(i) Suppose $R\subset Y$ is a Zariski open subset of $Y$, and $i:R\hookrightarrow U$ a closed embedding in some smooth scheme $U$. Define the sheaf of ideals $I_{R, U}$ by the following exact sequence of vector spaces on $R$.
\begin{equation*}
0\rightarrow I_{R, U}\rightarrow i^{-1}(O_U)\rightarrow O_Y|_R\rightarrow 0.
\end{equation*}

Then there is an exact sequence of sheaves of vector spaces on $R$:
\begin{equation*}
0\rightarrow \mathcal{S}_Y|_R\xrightarrow{\iota_{R, U}} \frac{i^{-1}(O_U)}{I_{R, U}^2}\xrightarrow{d}\frac{i^{-1}(T^*U)}{I_{R, U}\cdot i^{-1}(T^*U)}\hspace{1mm},
\end{equation*}
where $\iota_{R, U}$ is a morphism of sheaves of vector spaces, and $d$ is the differential map. 

(ii) Let $R\subset S\subset Y$ be Zariski open inclusions, and $i: R\hookrightarrow U$, $j: S\hookrightarrow V$ closed embeddings in smooth schemes $U$ and $V$. Let $\Phi: U\to V$ be a morphism satisfying $\Phi\circ i=j|_R$. Then the following diagram commutes:
\[\begin{tikzcd}
0\arrow{r} & \mathcal{S}_Y|_R \arrow{r}{\iota_{S, V}|_R} \arrow[swap]{d}{id} & \frac{j^{-1}(O_V)}{I^2_{S, V}}|_R\arrow{r}{d} \arrow{d}{i^{-1}(\Phi^\sharp)} & \frac{j^{-1}(T^*V)}{I_{S, V}\cdot j^{-1}(T^*V)}|_R \arrow{d}{i^{-1}(d\Phi)}\\
0\arrow{r} & \mathcal{S}_Y|_R \arrow{r}{\iota_{R, U}} & \frac{i^{-1}(O_U)}{I^2_{R, U}}\arrow{r}{d} & \frac{i^{-1}(T^*U)}{I_{R, U}\cdot i^{-1}(T^*U)}\hspace{1mm}.
\end{tikzcd}
\]
\end{theorem}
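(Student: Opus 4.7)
The strategy is to construct $\mathcal{S}_Y$ by defining it locally as the kernel of $d$ for a chosen local embedding into a smooth scheme, and then gluing using property (ii). Uniqueness is immediate from (i): once $R \subset Y$ admits a closed embedding $i: R \hookrightarrow U$ into a smooth scheme, the exact sequence forces $\mathcal{S}_Y|_R = \ker\bigl(d : i^{-1}(O_U)/I^2_{R,U} \to i^{-1}(T^*U)/I_{R,U}\cdot i^{-1}(T^*U)\bigr)$, so any sheaf satisfying (i) and (ii) is determined on any cover of $Y$ by embeddable pieces, hence globally.

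For existence, I would cover $Y$ by Zariski opens $\{R_\alpha\}$ each admitting a closed embedding $i_\alpha: R_\alpha \hookrightarrow U_\alpha$ into a smooth $\mathbb{C}$-scheme; such covers exist since $Y$ is locally of finite type, so every affine open embeds into some $\mathbb{A}^N$. Set $\mathcal{S}_\alpha := \ker(d)$ inside $i_\alpha^{-1}(O_{U_\alpha})/I^2_{R_\alpha,U_\alpha}$. The crucial technical input is the following local comparison: for any morphism $\Phi: U \to V$ of smooth schemes and closed embeddings $i: R \hookrightarrow U$, $j: R \hookrightarrow V$ with $\Phi \circ i = j$, the pullback $i^{-1}(\Phi^\sharp)$ carries $I^2_{R,V}$ into $I^2_{R,U}$ (because $\Phi^\sharp(I_{R,V}) \subset I_{R,U}$) and commutes with $d$ by functoriality of the de Rham differential; this yields the commutative right square in (ii), so $i^{-1}(\Phi^\sharp)$ restricts to a map on kernels, and the content of the \texttt{id} in the left column of (ii) is that this restriction is an \emph{isomorphism}. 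I would prove this isomorphism by a direct local computation: pick generators of $I_{R,V}$ together with the extra generators appearing in $I_{R,U}$, expand sections modulo $I^2$ using these generators, and apply the condition $\Phi \circ i = j$ to match kernels term by term. This is the main obstacle of the proof, since $\Phi$ need not be smooth and $\Phi^\sharp$ need not be injective, so one cannot simply reduce to a projection $V \times \mathbb{A}^k \to V$; the argument must keep track of nontrivial nilpotents arising from the chosen generators.

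Given this local comparison, canonical gluing isomorphisms $\tau_{\alpha\beta}: \mathcal{S}_\alpha|_{R_\alpha \cap R_\beta} \xrightarrow{\sim} \mathcal{S}_\beta|_{R_\alpha \cap R_\beta}$ between two unrelated embeddings are produced by the diagonal trick: form the combined embedding $(i_\alpha, i_\beta): R_\alpha \cap R_\beta \hookrightarrow U_\alpha \times U_\beta$ into the smooth product and apply the local comparison to each of the projections $U_\alpha \times U_\beta \to U_\alpha$ and $U_\alpha \times U_\beta \to U_\beta$, obtaining $\tau_{\alpha\beta}$ as a composite. Cocycle conditions on triple overlaps are verified analogously by passing to $U_\alpha \times U_\beta \times U_\gamma$ and invoking functoriality of the pullback under composition. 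Gluing the $\mathcal{S}_\alpha$ along the $\tau_{\alpha\beta}$ produces a global sheaf $\mathcal{S}_Y$; property (i) then holds by construction on each cover piece, and property (ii) holds for any compatible tuple $(R \subset S, i, j, \Phi)$ since these are precisely the morphisms used to build the gluing data.
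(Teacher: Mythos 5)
The paper does not prove this statement; it is quoted from \cite{Joy15} (Theorem 2.1) as background, so the only available comparison is with Joyce's original argument. Your skeleton --- define $\mathcal{S}_Y$ locally as $\ker(d)$ for a chosen embedding, compare two embeddings through the product embedding $(i_\alpha,i_\beta):R_\alpha\cap R_\beta\hookrightarrow U_\alpha\times U_\beta$, glue, and verify cocycles on triple products --- is exactly the standard one. But the step you yourself flag as ``the main obstacle,'' namely that $i^{-1}(\Phi^\sharp)$ restricts to an \emph{isomorphism} on kernels, is asserted rather than proved: ``pick generators \dots and match kernels term by term'' is a statement of intent, and you explicitly dismiss the device that makes the computation tractable. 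A general $\Phi:U\to V$ with $\Phi\circ i=j|_R$ factors through its graph as $U\xrightarrow{\;\Gamma_\Phi\;}U\times V\to V$, where $\Gamma_\Phi$ is a closed embedding of smooth schemes and the second map is a smooth projection; this reduces the comparison to precisely the two cases you claim one ``cannot simply reduce to.'' In the embedding case one uses that, locally, $I_{R,V}$ is generated by the pullback of $I_{R,U}$ together with the equations cutting out $U$ in $V$, and a Taylor-expansion argument modulo squares of ideals identifies the two kernels. Without this reduction (or an equivalent substitute) the existence half of the proof does not get off the ground.

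There is a second gap in your final sentence. Property (ii) is asserted for an \emph{arbitrary} $\Phi$ compatible with $i$ and $j$, whereas your gluing data is built only from the canonical projections off products; an arbitrary $\Phi$ is not ``one of the morphisms used to build the gluing data.'' To deduce (ii) in general you need the separate lemma that the induced map on $\ker(d)$ is independent of the choice of $\Phi$: if $\Phi,\Phi'$ both satisfy $\Phi\circ i=\Phi'\circ i=j|_R$, then $\Phi^\sharp-\Phi'^\sharp$ carries $j^{-1}(O_V)$ into $I_{R,U}$, and one must check that its effect on elements of $\ker(d)$ vanishes modulo $I_{R,U}^2$. This independence is what makes the left-hand vertical arrow in the diagram the identity, and it is also what guarantees your gluing isomorphisms $\tau_{\alpha\beta}$ are canonical; it is proved explicitly in \cite{Joy15} and cannot be waved away by functoriality alone.
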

Let $\mathcal{S}_Y^0$ be the kernel of the composition 
\begin{equation*}
\mathcal{S}_Y\rightarrow O_Y\rightarrow O_{Y_{red}}\hspace{1mm},
\end{equation*}
\ccblue{where the map $\mathcal{S}_Y\rightarrow O_Y$ is locally defined by composing $\iota_{R, U}$ with $i^{-1}(O_U)\rightarrow O_Y|_R$. }

Then the sheaf $\mathcal{S}_Y$ has a canonical decomposition 
\begin{equation*}
\mathcal{S}_Y\simeq \mathbb{C}_Y\oplus \mathcal{S}_Y^0\hspace{1mm}.
\end{equation*}

\begin{definition} (\cite{Joy15} Definition 2.5)
\label{dcrit}
An algebraic d-critical locus over $\mathbb{C}$ is a pair $(Y, s)$, where $Y$ is a $\mathbb{C}$\ccgreen{-scheme} and $s\in H^0(\mathcal{S}_Y^0)$ such that the following is satisfied: for every point $y\in Y$, there is a Zariski open neighborhood $R$ of $y$ with a closed embedding $i:R\hookrightarrow U$ into a smooth scheme $U$, such that $i(R)=\{df=0\}\subset U$ for $f: U\rightarrow \mathbb{C}$ a regular function on $U$. Furthermore, $\iota_{R, U}(s|_R)=i^{-1}(f)+I_{R, U}^2$. 
\end{definition}
Using the notation in the definition, the charts $(R, U, f, i)$'s are called critical charts of $(Y, s)$.

Consider a moduli space which has a d-critical locus structure. In particular it is locally covered by degeneracy loci, and one can associate motivic vanishing cycles to the local charts. However, these locally defined invariants do not naturally agree on intersections, see e.g. \cite{BJM19} Example 3.5.  
\ccblue{It turns out} that these locally defined invariants can be modified using an orientation, and they then agree and can be glued to a global motivic invariant.

To give the definition of an orientation, we first recall the notion of the virtual canonical bundle of a d-critical locus.
\begin{theorem} (\cite{Joy15} Theorem 2.28, \cblue{\cite{BJM19} Theorem 5.4})
\label{defK}
Let $(Y, s)$ be a d-critical locus, and let $Y^{red}$ be $Y$ with its reduced structure. Then there exists a line bundle $K_{Y, s}$ on $Y^{red}$ which is uniquely defined by two properties. 

(i) If $(R, U, f, i)$ is a critical chart on $(Y, s)$, there is a natural isomorphism
\begin{equation}
\label{equiota}
\iota_{R, U, f, i}: K_{Y, s}|_{R^{red}}\rightarrow i^*(K_U^{\otimes 2})|_{R^{red}},
\end{equation}
where $K_U$ is the usual canonical bundle of $U$.
 
(ii) Let $\Phi: (R, U, f, i)\rightarrow (S, V, g, j)$ be an embedding of critical charts on $(Y, s)$.  Then there is an isomorphism of line bundles on $Crit(f)^{red}$:
\begin{equation*}
J_\Phi:K^{\otimes^2}_U|_{Crit(f)^{red}}\rightarrow \Phi|^*_{Crit(f)^{red}}(K_V^{\otimes^2}).
\end{equation*}
such that 
\begin{equation*}
i|^*_{R^{red}}(J_\Phi):i^*(K_U^{\otimes^2})|_{R^{red}}\rightarrow j^*(K_V^{\otimes^2})|_{R^{red}},
\end{equation*}
is an isomorphism and 
\begin{equation*}
\iota_{S, V, g, j}|_{R^{red}}=i|^*_{R^{red}}(J_\Phi)\circ \iota_{R, U, f, i}: K_{X, s}|_{R^{red}}\rightarrow j^*(K_V^{\otimes^2})|_{R^{red}}.
\end{equation*}
\end{theorem}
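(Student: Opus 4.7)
The plan is to construct $K_{Y,s}$ by gluing local models $i^{*}(K_U^{\otimes 2})|_{R^{\mathrm{red}}}$ on critical charts, and to derive uniqueness directly from property (i). On a single critical chart $(R,U,f,i)$ I would set $K_R := i^{*}(K_U^{\otimes 2})|_{R^{\mathrm{red}}}$ with $\iota_{R,U,f,i}$ equal to the identity, making property (i) tautological. All the content is therefore in producing the comparison isomorphism $J_\Phi$ for an embedding of critical charts $\Phi:(R,U,f,i)\hookrightarrow(S,V,g,j)$ and verifying that the $J_\Phi$ enjoy a cocycle property strong enough to glue.

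Since $\Phi:U\hookrightarrow V$ is a closed embedding of smooth schemes, the conormal sequence yields
\begin{equation*}
\Phi^{*}K_V \;\cong\; K_U \otimes \det(N_{U/V})^{-1},
\end{equation*}
and therefore
\begin{equation*}
\Phi^{*}K_V^{\otimes 2} \;\cong\; K_U^{\otimes 2}\otimes \det(N_{U/V})^{-2}.
\end{equation*}
Producing $J_\Phi$ on $\mathrm{Crit}(f)^{\mathrm{red}}$ is thus equivalent to trivializing $\det(N_{U/V})^{-2}$ there. By the structural result on chart embeddings (a Thom--Sebastiani type statement), \'etale locally around any point of $R$ one may split $V\cong U\times W$ compatibly with $\Phi$ so that $g(u,w)=f(u)+Q(w)$ for a nondegenerate quadratic form $Q$ on $W$. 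The Hessian $\mathrm{Hess}(Q):W\xrightarrow{\sim}W^{\vee}$ then induces an isomorphism $\det(N_{U/V})\xrightarrow{\sim}\det(N_{U/V})^{-1}$, equivalently a trivialization of $\det(N_{U/V})^{-2}$, and I would define $J_\Phi$ from this trivialization.

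The main obstacle I anticipate is showing that this trivialization is canonical, independent of the local Thom--Sebastiani splitting. Two such splittings differ by an automorphism preserving $Q$, i.e.\ an element of $O(Q)$, whose determinant is $\pm 1$ and becomes trivial only after squaring; this is precisely the reason the theorem must be phrased with $K_U^{\otimes 2}$ rather than $K_U$, and careful sign bookkeeping is required. Once independence of the splitting is established, the cocycle identity $J_{\Psi\circ\Phi}=\Phi^{*}(J_\Psi)\circ J_\Phi$ for composable chart embeddings follows by writing a common block-form Thom--Sebastiani model and factoring the determinant of a block-diagonal Hessian as a product. To glue two unrelated critical charts $(R,U,f,i)$ and $(S,V,g,j)$ meeting at a point $y\in Y$, I would appeal to Joyce's comparison result that, after shrinking about $y$, both restrict to subcharts of a common third critical chart; the resulting zigzag together with the cocycle identity supplies well-defined transition isomorphisms, and these glue to a line bundle $K_{Y,s}$ on $Y^{\mathrm{red}}$ verifying (i) and (ii).

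Uniqueness follows formally: property (i) canonically identifies any candidate $K$ with $i^{*}(K_U^{\otimes 2})|_{R^{\mathrm{red}}}$ on each chart via $\iota_{R,U,f,i}$, and property (ii) forces the transitions between these local identifications to agree with the $J_\Phi$ constructed above, so any two line bundles satisfying (i) and (ii) are canonically isomorphic on $Y^{\mathrm{red}}$.
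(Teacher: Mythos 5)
This theorem is not proved in the paper at all: it is quoted verbatim from Joyce (\cite{Joy15}, Theorem 2.28), and the paper only remarks that $J_\Phi$ is induced by the isomorphism $K_U\otimes\Lambda^n N^*_{UV}\to\Phi^*(K_V)$, deferring to \cite{Joy15}, Def.\ 2.26 and \cite{BJM19}, Section 4 for details. So there is no in-paper proof to compare against; what you have written should instead be measured against Joyce's original argument.

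Against that benchmark your outline is essentially correct and follows the same architecture: local models $i^*(K_U^{\otimes 2})|_{R^{\mathrm{red}}}$, the conormal sequence reducing $J_\Phi$ to a trivialization of $\det(N_{U/V})^{\otimes -2}$ over $\mathrm{Crit}(f)^{\mathrm{red}}$, the Hessian of the transverse quadratic part supplying that trivialization, the sign ambiguity in $O(Q)$ explaining why only the square of $K_U$ glues, a cocycle identity for composed embeddings, and Joyce's result that any two critical charts locally embed into a common third one to produce transition data. One refinement is worth noting: the "main obstacle'' you identify (independence of the chosen Thom--Sebastiani splitting) is sidestepped in Joyce's treatment rather than attacked head-on. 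Since $g$ vanishes to second order along $U$ near $\mathrm{Crit}(f)$, its normal Hessian $q_{UV}$ is an intrinsically defined nondegenerate quadratic form on $N_{UV}|_{\mathrm{Crit}(f)^{\mathrm{red}}}$, and $J_\Phi$ is defined directly from $\det(q_{UV})$; no splitting is chosen, so no independence statement needs to be proved. If you insist on defining $J_\Phi$ through a chosen \'etale-local splitting $g(u,w)=f(u)+Q(w)$, you do owe the reader the verification you flag, and you should also be explicit that the splitting exists only \'etale-locally near points of $R$, so the gluing of $J_\Phi$ from these local descriptions into a single isomorphism over all of $\mathrm{Crit}(f)^{\mathrm{red}}$ is itself a (routine but necessary) descent step. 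With either formulation the uniqueness argument you give is the standard formal one and is fine.
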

Here $J_\Phi$ is induced by the isomorphism $K_U\otimes \Lambda^nN^*_{UV}\rightarrow \Phi^*(K_V)$.  For details of the definition of $J_\Phi$ see \cite{Joy15}, Def. 2.26 or \cite{BJM19}, section 4.

\begin{definition} (\cite{Joy15} Definition 2.31)
\label{defOD}
Let $(Y, s)$ be a d-critical locus. An \emph{orientation on $(Y,s)$} is given by a line bundle $L$ on $Y^{red}$ \ccred{together with an isomorphism $L^{\otimes2}\simeq K_{Y, s}$}.
\end{definition}

With these two additional structures, a global motivic DT invariant can always be defined:

\begin{theorem} (\cite{BJM19} Theorem 5.10)
\label{thm1.1}
Let $(Y,s)$ be a finite type algebraic d-critical locus with a choice of orientation $K^{1/2}_{Y,s}$. There exists a unique motive $MF_{Y, s}\in \overline{\mathcal{M}}_Y^{\hat{\mu}}$ with the property that if $(R, U, f, i)$ is a critical chart on $(Y, s)$, then
\begin{equation*}
MF_{Y, s}|_R=i^*(\mathbb{L}^{-dimU/2}\odot MF^\phi_{U,f})\odot\Upsilon(Q_{R, U, f, i}) \in \overline{\mathcal{M}}_R^{\hat{\mu}}.
\end{equation*}
\end{theorem}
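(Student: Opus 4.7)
The plan is to construct $MF_{Y,s}$ by defining a local motive on each critical chart using the prescribed formula, then verifying that these local motives agree on overlaps inside the quotient $\overline{\mathcal{M}}^{\hat{\mu}}$, so that they glue to a global element. Uniqueness will then be automatic because the critical charts of Definition~\ref{dcrit} form an open cover of $Y$, and $\overline{\mathcal{M}}^{\hat{\mu}}_{(-)}$ satisfies Zariski descent via the scissor relations.

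First I would unwind the formula on a single chart $(R,U,f,i)$. The motivic vanishing cycle $MF^\phi_{U,f}$ is a well-defined element of $\mathcal{M}^{\hat{\mu}}_{\mathrm{Crit}(f)}$, and $\mathbb{L}^{-\dim U/2}$ is defined in $\mathcal{M}^{\hat{\mu}}$ since we have already constructed $\mathbb{L}^{1/2}$. The factor $\Upsilon(Q_{R,U,f,i})$ requires the principal $\mathbb{Z}_2$-bundle $Q_{R,U,f,i}$ on $R^{\mathrm{red}}$ that compares the orientation $K_{Y,s}^{1/2}$ with a square root of $i^*(K_U^{\otimes 2})|_{R^{\mathrm{red}}}$ via the isomorphism $\iota_{R,U,f,i}$ of Theorem~\ref{defK}. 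With these pieces the right-hand side is unambiguously an element of $\overline{\mathcal{M}}^{\hat{\mu}}_R$.

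The heart of the argument, and the main obstacle, is showing that the local motives agree on pairwise intersections $R\cap R'$ of critical charts. By Joyce's structural results on d-critical loci, any two charts are locally connected by a chain of chart embeddings, so after passing to a common refinement it suffices to treat a single embedding $\Phi:(R,U,f,i)\hookrightarrow (S,V,g,j)$. Locally such an embedding is, up to \'etale change, the stabilization $V\cong U\times \mathbb{A}^k$, $g=f\oplus q$, with $q$ a nondegenerate quadratic form on $\mathbb{A}^k$. The motivic Thom--Sebastiani theorem then gives
\begin{equation*}
MF^\phi_{V,g}\;\simeq\; MF^\phi_{U,f}\,\odot\,MF^\phi_{\mathbb{A}^k,q}
\end{equation*}
on the overlap, and a direct computation identifies $\mathbb{L}^{-k/2}\odot MF^\phi_{\mathbb{A}^k,q}$ with $\Upsilon(P_q)$, where $P_q$ is the principal $\mathbb{Z}_2$-bundle of orientations of $q$. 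On the other hand, comparing the two sides of the defining isomorphisms $\iota_{R,U,f,i}$ and $\iota_{S,V,g,j}|_R$ through $J_\Phi$ identifies $Q_{S,V,g,j}|_R$ with $Q_{R,U,f,i}\otimes_{\mathbb{Z}_2} P_q$.

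To conclude, I would invoke the defining relation of $\overline{\mathcal{M}}^{\hat{\mu}}$, namely $\Upsilon(P\otimes_{\mathbb{Z}_2} Q)=\Upsilon(P)\odot\Upsilon(Q)$, which precisely cancels the discrepancy: the Thom--Sebastiani factor $\Upsilon(P_q)$ exactly matches the change in the orientation-correction factor between the two charts. Thus the local motives descend to a well-defined class on $R\cap R'$ in $\overline{\mathcal{M}}^{\hat{\mu}}_{R\cap R'}$, and the resulting family of compatible local motives glues to a unique global $MF_{Y,s}\in\overline{\mathcal{M}}^{\hat{\mu}}_Y$ by the scissor relations. The only genuine difficulty is the chart-independence step, which is governed entirely by the motivic Thom--Sebastiani identity and its compatibility with the $\mathbb{Z}_2$-torsor bookkeeping built into $\overline{\mathcal{M}}^{\hat{\mu}}$; the gluing and uniqueness are then formal consequences.
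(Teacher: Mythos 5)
This statement is quoted verbatim from \cite{BJM19} (their Theorem 5.10); the paper you are reading supplies no proof of it, so there is no internal argument to compare against. Your outline does reproduce the architecture of the actual proof in \cite{BJM19}: define the motive locally on critical charts, compare overlapping charts by embedding both into a common chart (Joyce's structural result in \cite{Joy15}), use the motivic Thom--Sebastiani theorem together with the identification of $\mathbb{L}^{-k/2}\odot MF^\phi_{\mathbb{A}^k,q}$ with $\Upsilon(P_q)$, cancel the resulting discrepancy against the change in $Q_{R,U,f,i}$ using the relation $\Upsilon(P\otimes_{\mathbb{Z}_2}Q)=\Upsilon(P)\odot\Upsilon(Q)$ that defines $\overline{\mathcal{M}}^{\hat{\mu}}$, and glue by the scissor relations. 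The one step you should not wave at is the claim that a chart embedding is ``up to \'etale change'' a stabilization by a nondegenerate quadratic form: unlike the perverse sheaf of vanishing cycles, the motivic vanishing cycle $MF^\phi_{U,f}$ does \emph{not} satisfy \'etale descent, which is precisely why the key comparison (Theorem 5.4 of \cite{BJM19}) needs a more delicate argument than its perverse-sheaf analogue rather than a reduction to the standard local model. Taken literally, that sentence is the gap in your sketch; the remaining steps (well-definedness of each local term, gluing, and uniqueness from the cover by critical charts) are formal and correctly handled.
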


Here, the term $MF^\phi_{U,f}$ is the motivic vanishing cycle associated to the critical chart $(R, U, f, i)$, \ccred{see \cite{Loo02}, \cite{DF99}, \cite{BJM19}}. The term $Q_{R, U, f, i}$ is the principal $\mathbb{Z}_2$\ccgreen{-bundle} of local isomorphisms $\alpha_R:K_{Y, s}^{1/2}|_{R^{red}}\to i^*(K_U)|_{R^{red}}$ such that $\alpha_R^{\otimes 2}=\iota_{R, U, f, i}$ in (\ref{equiota}). 
We would like to point out that the motivic vanishing cycle used in this paper is \cgreen{the class of $f^{-1}(0)$ with the trivial action minus the motivic nearby cycle, which is the negative of the definition used in \cite{BBS13}}.
\subsection{Construction of $\operatorname{Hilb}^n(\mathbb{C}^3)$}\label{subsec:}

We next briefly recall the construction of $\operatorname{Hilb}^n(\mathbb{C}^3)$ used in \cite{BBS13}, in particular its presentation as a degeneracy locus.

To a subscheme $Q\subset \mathbb{C}^3$ with Hilbert polynomial $P_Q=n$ we can associate 
an $n$-dimensional vector space $V_n=H^0(\ccgreen{O}_Q)$, three pairwise commuting linear maps 
\begin{equation*}
X, Y, Z: V_n\to V_n
\end{equation*}
defined as multiplication by $x$, $y$, $z\in \mathbb{C}[x, y, z]$, and a vector $v\in V_n$ corresponding to $1\in H^0(\ccgreen{O}_Q)$.  The vector is $v$ cyclic for the action of $\mathbb{C}[X, Y, Z]$ on $V_n$: $\mathbb{C}[X,Y,Z]\cdot v=V_n$.  

Now consider the space of triples of $n\times n$ matrices and a vector in $V_n$: 
\begin{equation*}
Hom(V_n, V_n)^3\times V_n. 
\end{equation*}
This space is a quasiprojective variety and admits a $GL_n$ action induced from the action of $GL_n$ on $V_n$. The character $\chi: GL_n\rightarrow \mathbb{C}^*$ defined by $\chi(g)=\operatorname{det}(g)$ defines a linearization of the trivial bundle. Let $U$ be the stable locus of the linearization. It turns out that $U$ consists of the points $(X, Y, Z, v)$ where $v$ is cyclic for the action of $X, Y, Z$. Consider the GIT quotient 
\begin{equation*}
\operatorname{NHilb}^n(\mathbb{C}^3):=Hom(V_n, V_n)^3\times V_n\sslash GL_n=U/GL_n
\end{equation*}
with respect to this linearization. 
Let $W:\operatorname{NHilb}^n(\mathbb{C}^3)\rightarrow \mathbb{C}$ be the function on $\operatorname{NHilb}^n(\mathbb{C}^3)$ defined by $W(X, Y, Z, v)=\operatorname{tr}([X, Y]Z)$. The condition $\{dW=0\}$ is equivalent to the condition that $X$, $Y$ and $Z$ pairwise commute. Then 
the locus $\{dW=0\}$ is isomorphic to $\operatorname{Hilb}^n(\mathbb{C}^3)$. As the construction suggests, we call $\operatorname{NHilb}^n(\mathbb{C}^3)$ the noncommutative Hilbert scheme. 

\medskip\noindent
{\bf Remark.} \label{lineartrans} While any nonzero scalar multiple of $W$ would work just as well as $W$ in defining $\operatorname{Hilb}^n(\mathbb{C}^3)$, the construction of \cite{BBS13} makes clear that the choice of Calabi-Yau form $dx\wedge dy\wedge dz$ on $\mathbb{C}^3$ determines the normalization of $W$ described above.  More precisely, any determinant 1 linear change of coordinates in $T=\mathrm{span}\{x,y,z\}$ will leave $W$ unchanged.  

\section{D-critical Locus Structure on $\operatorname{Hilb}^n(\omega_S)$}\label{sec:dcrit}

We begin this section with a lightning review of some standard notions and notations of toric geometry.  We knowingly omit some important details and simply refer the reader to \cite{CLS11} if more detail is needed.

We let $N$ be an $n$-dimensional lattice and $\Sigma$ a fan in $N_\mathbb{R}=N\otimes\mathbb{R}$, giving rise to a normal toric variety $X_\Sigma$ with torus $T=N\otimes_{\mathbb{Z}}\mathbb{C}^*$. We can describe $X_\Sigma$ by a quotient construction.  Let $v_1,\ldots, v_r$ be the primitive integral generators of the 1-dimensional cones in $\Sigma(1)$. \ccgreen{Let $A_{n-1}$ denote the Chow group of rational equivalence classes of cycles of dimension $n-1$.} We have an exceptional closed subset $Z(\Sigma)\subset\mathbb{C}^r$ and the group $G=Hom(A_{n-1}(X_\Sigma),\mathbb{C}^*)$ acts on $\mathbb{C}^r-Z(\Sigma)$ by an action which will be described below.  Then $X_\Sigma$ is identified with the quotient
\begin{equation*}
X_\Sigma=\left(\mathbb{C}^r-Z(\Sigma)
\right)/G.
\end{equation*}
If $\sigma$ is a cone in $\Sigma$, then we have an affine toric variety $X_\sigma$, which is identified with an open subset of $X_\Sigma$ by
\begin{equation*}
X_\sigma=\left\{[(x_1,\ldots,x_r)]\in X_\Sigma\mid x_i\ne 0{\rm \ if\ }v_i\not\in\sigma
\right\},
\end{equation*}
where $[(x_1,\ldots,x_r)]$ denotes equivalence class of $(x_1,\ldots,x_r)\in \mathbb{C}^r-Z(\Sigma)$ mod $G$.

For $i=1\ldots r$ we have $T$-invariant divisors $D_i\subset X_\Sigma$ defined by $x_i=0$, where $(x_1,\ldots,x_r)$ are coordinates on $\mathbb{C}^r$.  The action of $G$ on $\mathbb{C}^r$ is given by
\begin{equation*}
g\cdot(x_1,\ldots,x_r)=(g([D_1])x_1,\ldots,g([D_r])x_r)
\end{equation*}
where $[D_i]$ is the divisor class of $D_i$.

Now suppose that $X_\Sigma$ is smooth.  Then $A_{n-1}(X_\Sigma)$ is torsion-free and consequently $G\simeq(\mathbb{C}^*)^{r-n}$.  Furthermore, if $\sigma\in\Sigma(n)$ is a maximal cone generated by $v_{i_1},\ldots,v_{i_n}$, the affine toric variety $X_\sigma$ is isomorphic to $\mathbb{C}^n$ via the isomorphism
\begin{equation}\label{eq:affinecoordinates}
\mathbb{C}^n\to X_\sigma,\qquad (x_1,\ldots,x_n)\mapsto (1,\ldots,1,x_1,1,\ldots,1,x_2,1\ldots,1,x_n,1\ldots,1),
\end{equation}
where $x_j$ is in the $i_j$th position and all other entries are 1.

We are concerned with the case where $X_\Sigma$ is a smooth Calabi-Yau threefold.  For any smooth toric variety we have
\begin{equation*}
\ccgreen{O}(K_{X_\Sigma})\simeq\ccgreen{O}\left(-\sum_{i=1}^rD_i\right),
\end{equation*}
so the Calabi-Yau condition says that $\sum_{i=1}^rD_i\sim 0$.  It is convenient to express this condition in terms of the homogeneous coordinate ring
\begin{equation*}
S=\mathbb{C}[x_1,\ldots,x_r].
\end{equation*}
The ring $S$ is graded by $A_{n-1}(X_\Sigma)=A_2(X_\Sigma)$, with $x_i$ having degree $[D_i]\in A_2(X_\Sigma)$.  The Calabi-Yau condition translates into the statement that $x_1\cdots x_r$ has degree $0$.

We now construct a nowhere-vanishing $T$-invariant holomorphic form $\Omega$ on $X_\Sigma$, well-defined up to sign. We have holomorphic $r$-form on $\mathbb{C}^r$
\begin{equation*}
\widehat{\Omega}=dx_1\wedge\cdots\wedge dx_r,
\end{equation*}
well-defined up to a sign depending on the chosen ordering of $\Sigma(1)$.  The action of $G\simeq(\mathbb{C}^*)^{r-3}$ on $\mathbb{C}^r$ determines a rank $r-3$ lattice of holomorphic vector fields on $\mathbb{C}^r$.   More precisely, let $\Lambda\subset\mathbb{Z}^r$ be the lattice of relations
\begin{equation}\label{eqn:Lambda}
\Lambda=\left\{(a_1,\ldots,a_r)\in\mathbb{Z}^r\mid \sum a_i v_i=0
\right\}.
\end{equation}
Then $\lambda=(a_1,\ldots,a_r)$ determines the vector field
\begin{equation*}
\chi_\lambda=\sum a_i x_i\frac{\partial}{\partial x_i}
\end{equation*}
on $\mathbb{C}^r$.  The relation $\lambda$ determines a 1-parameter subgroup of $G$
\begin{equation*}
t\mapsto ([D_i]\mapsto t^{a_i}).
\end{equation*}
The fact that $\lambda=(a_1,\ldots,a_r)$ satisfies (\ref{eqn:Lambda}) implies that $[D_i]\mapsto t^{a_i}$ is compatible with linear equivalence and so describes an element of $G=Hom(A_{n-1}(X_\Sigma),\mathbb{C}^*)$.
The vector field $\chi_\lambda$ is then seen to be the infinitesimal action of this 1-parameter subgroup. 

Choosing generators
$\chi_1,\ldots,\chi_{r-3}$ for the lattice of vector fields, we define
\begin{equation}\label{eqn:omega}
\Omega = i_{\chi_1}\cdots i_{\chi_{r-3}}(\widehat{\Omega}),
\end{equation}
\ccred{where $i_{\chi_i}$ is contraction with respect to $\chi_i$.} For each $j=1,\ldots r-3$ we have $i_{\chi_j}\Omega=0$, so that $\Omega$ is $G$-invariant and therefore pulls back from a holomorphic 3-form on $X_\Sigma$.  We abuse notation and denote this 3-form on $X_\Sigma$ by $\Omega$ as well.  In other words, $\Omega$ is the unique holomorphic 3-form $\Omega$ on $X_\Sigma$ whose pullback to $\mathbb{C}^r-Z(\Sigma)$ is the right hand side of (\ref{eqn:omega}).  The Calabi-Yau condition guarantees that $\Omega$ is a 3-form rather than merely a 3-form valued in a line bundle. 

We now specialize further to local toric surfaces, the total space $\omega_S$ of the canonical bundle of a smooth complete toric surface $S$.  Since $S$ is a toric variety, we have a rank 2 lattice $N'$ and a complete fan $\Sigma'$ in $N'_\mathbb{R}$ with edges spanned by primitive integral vectors $v_1',\ldots,v_{r-1}'$.  We form the rank 3 lattice $N=N'\oplus \mathbb{Z}$, identify the fan $\Sigma'$ with the collection of cones $\{\sigma'\times\{1\}\mid \sigma'\in\Sigma'\}$ in the affine hyperplane of $N_\mathbb{R}$ defined by setting the last coordinate to 1.  We then form the fan $\Sigma$ as the collection of cones in $N_\mathbb{R}$ over the cones ${\sigma'}\times\{1\}$, \ccgreen{i.e.\ the collection of three-dimensional cones generated by the rays from the origin through the vertices of $\sigma'\times \{1\}$}.  In particular, the primitive integral generators of the 1-dimensional cones of $\Sigma$ are identified with
\begin{equation}\label{eq:edges}
v_1=(v_1',1),\ldots,v_{r-1}=(v_{r-1}',1), v_r=(\mathbf{0},1),
\end{equation}
where $\mathbf{0}$ is the zero of $N$.
We introduce the convention of replacing $x_r$ by the variable $p$, to emphasize the distinguished role of $v_r$.

\medskip\noindent
{\bf Example.}
Local $\mathbb{P}^2$ can be described by a fan $\Sigma$ with edges generated by the vectors
\begin{equation*}
v_1=(1,0,1),\ v_2=(0,1,1),\ v_3=(-1,-1,1),\ v_4=(0,0,1).
\end{equation*}
We have maximal cones $\sigma_j$ for $j=1,2,3$.  Each $\sigma_j$ is spanned by the set of vectors $\{v_1,\ldots,v_4\}-\{v_j\}$.

In this case, the lattice $\Lambda$ is spanned by $(1,1,1-3)$, corresponding to the relation $v_1+v_2+v_3-3v_4=0$.  We get the generating vector field $\chi=x_1(\partial/\partial x_1)+x_2(\partial/\partial x_2)+x_3(\partial/\partial x_3)-3p(\partial/\partial p)$.  Putting $\Omega=i_\chi\left(dx_1\wedge dx_2\wedge dx_3\wedge dp
\right)$, we get
\begin{equation*}
\Omega=x_1 dx_2\wedge dx_3\wedge dp - x_2 dx_1\wedge dx_3\wedge dp+x_3dx_1\wedge dx_2\wedge dp+3pdx_1\wedge dx_2\wedge dx_3.
\end{equation*}

By respectively setting $x_i=1$ for $i=1,2,3$, we obtain 
\begin{equation*}
\Omega|_{X_{\sigma_1}} = dx_2\wedge dx_3\wedge dp,\Omega|_{X_{\sigma_2}} = dx_3\wedge dx_1\wedge dp,\Omega|_{X_{\sigma_3}} = dx_1\wedge dx_2\wedge dp.
\end{equation*}
Accordingly, we can transfer the construction of Section~\ref{subsec:} from $\operatorname{Hilb}^n(\mathbb{C}^3)$ to each $\operatorname{Hilb}^n(X_{\sigma_i})$, using the respective coordinate systems $(x_2,x_3,p), (x_3,x_1,p),(x_1,x_2,p)$.\footnote{We have abused notation here.  The relationship between the three local coordinates and homogeneous coordinates is given by three separate applications of (\ref{eq:affinecoordinates}).  It follows that the coordinate changes will be nontrivial, even for the variables in different coordinate systems denoted by the same symbol.} For example we can describe $\operatorname{NHilb}^n(X_{\sigma_1})$ as a GIT quotient of a space of $n\times n$ matrices $X_2,X_3,P$ together with a vector $v$, and define $\operatorname{Hilb}^n(X_{\sigma_1})$ as the degeneracy locus of $W_1=\operatorname{Tr}([X_2,X_3]P)$.  Similarly, we can define $\operatorname{Hilb}^n(X_{\sigma_2})$  as the degeneracy locus of $W_2=\operatorname{Tr}([X_3,X_1]P)$ in $\operatorname{NHilb}^n(X_{\sigma_2})$ and $\operatorname{Hilb}^n(X_{\sigma_3})$  as the degeneracy locus of $W_3=\operatorname{Tr}([X_1,X_2]P)$ in $\operatorname{NHilb}^n(X_{\sigma_3})$.  This description gives the data $(\operatorname{Hilb}^n(X_{\sigma_j}), \operatorname{NHilb}^n(X_{\sigma_j}), W_j, i_j)$, where $i_j:\operatorname{Hilb}^n(X_{\sigma_j})\hookrightarrow \operatorname{NHilb}^n(X_{\sigma_j})$ is the inclusion.

We will show that the $(\operatorname{Hilb}^n(X_{\sigma_j}), \operatorname{NHilb}^n(X_{\sigma_j}), W_j, i_j)$ are among the d-critical charts of a d-critical locus on $\operatorname{Hilb}^n(\omega_{\mathbb{P}^2})$. The construction of the d-critical locus structure proceeds in two steps.  

First, we show that the sections $s_j$ of $S^0_{^n(X_{\sigma_j})}$ induced by the $W_j$ agree on pairwise intersections $\operatorname{Hilb}^n(X_{\sigma_j})\cap \operatorname{Hilb}^n(X_{\sigma_k})\subset \operatorname{Hilb}^n(\omega_{\mathbb{P}^2})$.  We will show this more generally for any local toric Calabi-Yau threefold in Proposition~\ref{prop:opendcrit} below.  But this is still not enough to construct a d-critical locus, since the $\operatorname{Hilb}^n(X_{\sigma_j})$ do not cover $\operatorname{Hilb}^n(\omega_{\mathbb{P}^2})$.

Second, we construct a cover of $\omega_{\mathbb{P}^2}$ by open subsets $U_\alpha\simeq\mathbb{C}^3$ such that the $\operatorname{Hilb}^n(U_\alpha)$ cover $\operatorname{Hilb}^n(\omega_{\mathbb{P}^2})$.  Furthermore, automorphisms of $\mathbb{P}^2$ induce isomorphisms $U_\alpha\simeq X_{\sigma_j}$.  These isomorphisms are used to construct the data $(\operatorname{Hilb}^n(U_\alpha), \operatorname{NHilb}^n(U_\alpha), W_\alpha, i_\alpha)$.  Finally, a comparison to the first step shows that the sections $s_\alpha$ of $S^0_{^n(U_\alpha)}$ are pairwise compatible.  We will elaborate on this step more generally in the proof of Theorem~\ref{thm:dcrit} below.

\medskip
Returning to the general case, we have the following easy lemma.  The coordinates $(x,y,z)$ on $X_\sigma$ in the statement of the lemma are given by (\ref{eq:affinecoordinates}).  

\begin{lemma}\label{lem:plus1}
Let $\Omega$ be the holomorphic 3-form (\ref{eqn:omega}) on the local Calabi-Yau threefold $\omega_S$ and let $\sigma$ be a maximal cone in a fan for $\omega_S$.  Then we can order the coordinates$(x,y,z)$ on $X_\sigma\simeq\mathbb{C}^3$ so that $\Omega|_{X_\sigma}=dx\wedge dy\wedge dz$. 
\end{lemma}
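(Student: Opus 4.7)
The plan is to compute $\Omega|_{X_\sigma}$ directly from the definition (\ref{eqn:omega}) and then to absorb the resulting sign by a permutation of the coordinates on $X_\sigma \simeq \mathbb{C}^3$.

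First I would choose a basis of the lattice of relations $\Lambda$ in (\ref{eqn:Lambda}) adapted to $\sigma$. Smoothness of $X_\Sigma$ forces the three generators $v_{i_1}, v_{i_2}, v_{i_3}$ of $\sigma$ to form a $\mathbb{Z}$-basis of $N$, so for every $a \in S := \{1, \ldots, r\} \setminus \{i_1, i_2, i_3\}$ we may uniquely write $v_a = \sum_{\ell=1}^3 c_a^\ell \, v_{i_\ell}$. This yields the relation $\lambda_a = -e_a + \sum_\ell c_a^\ell e_{i_\ell} \in \Lambda$. The projection $\Lambda \to \mathbb{Z}^S$ onto the coordinates indexed by $S$ is injective (as the $v_{i_\ell}$ are linearly independent) and sends $\lambda_a$ to $-e_a$, so $\{\lambda_a\}_{a \in S}$ is a $\mathbb{Z}$-basis of $\Lambda$. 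Since any change of basis of $\Lambda$ alters $\Omega$ only by its determinant ($\pm 1$) and $\Omega$ is already defined only up to sign, we may compute $\Omega$ with this distinguished basis, obtaining the vector fields
\[
\chi_a \;=\; -\,x_a \, \frac{\partial}{\partial x_a} \;+\; \sum_{\ell=1}^3 c_a^\ell \, x_{i_\ell} \, \frac{\partial}{\partial x_{i_\ell}}.
\]

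Next, I would carry out the iterated contraction in (\ref{eqn:omega}) and then restrict along the section (\ref{eq:affinecoordinates}), i.e.\ the map $\mathbb{C}^3 \hookrightarrow \mathbb{C}^r$ sending $(x_{i_1}, x_{i_2}, x_{i_3})$ to the point with $x_a = 1$ for every $a \in S$. Pullback along this section kills $dx_a$ for every $a \in S$, so the only terms of $i_{\chi_{a_1}} \cdots i_{\chi_{a_{r-3}}}\bigl(dx_1 \wedge \cdots \wedge dx_r\bigr)$ that survive are those in which every such $dx_a$ has been absorbed by one of the contractions. The decisive observation is that for $a' \in S$ with $a' \neq a$, the vector field $\chi_{a'}$ has no $\partial/\partial x_a$ component (it only touches $x_{a'}$ and the $x_{i_\ell}$), so $dx_a$ can only be absorbed by $\chi_a$ itself. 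This forces a unique combinatorial term to contribute: each $i_{\chi_a}$ must use its $-x_a\, \partial/\partial x_a$ piece, contributing a factor $-x_a$ and leaving $dx_{i_1} \wedge dx_{i_2} \wedge dx_{i_3}$ up to a permutation sign. Setting $x_a = 1$ then yields $\Omega|_{X_\sigma} = \varepsilon \, dx_{i_1} \wedge dx_{i_2} \wedge dx_{i_3}$ for some $\varepsilon \in \{\pm 1\}$.

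The only obstacle is pinning down the sign $\varepsilon$, which is a routine bookkeeping exercise and, in any case, inessential for the statement: if $\varepsilon = -1$, simply swap any two of $x_{i_1}, x_{i_2}, x_{i_3}$ before relabeling them as $(x, y, z)$. The resulting ordering of coordinates realizes $\Omega|_{X_\sigma} = dx \wedge dy \wedge dz$ on $X_\sigma \simeq \mathbb{C}^3$, proving the lemma.
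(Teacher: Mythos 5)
Your proof is correct and follows essentially the same route as the paper: your adapted basis $\{\lambda_a\}_{a\in S}$ of $\Lambda$ is exactly the paper's choice of generators for which the matrix $A$ of coefficients on the complementary coordinates is the identity (up to sign), so that the determinant appearing in $\Omega|_{X_\sigma}=\pm\det(A)\,dx_{i_1}\wedge dx_{i_2}\wedge dx_{i_3}$ is $\pm1$. Both arguments then dispose of the residual sign by permuting the three coordinates, so the only difference is that you carry out the iterated contraction explicitly where the paper quotes the determinant formula.
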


We had already seen this for $\omega_{\mathbb{P}^2}$ by direct calculation.

\begin{proof}
 Reordering $\Sigma(1)$ if necessary, we can assume that the edges of $\sigma$ are generated by $v_{r-2},v_{r-1}$, and $v_r$. We have coordinates $(x_{r-2},x_{r-1},x_r)$ on $X_\sigma$ \ccgreen{as in (\ref{eq:affinecoordinates})}. Choosing generators $\lambda_1,\ldots,\lambda_{r-3}$ of the lattice (\ref{eqn:Lambda}) and writing $\lambda_i=(a_{i1},\ldots,a_{ir})$, we get $\Omega|_{X_\sigma}=\pm\operatorname{det}(A)\, dx_{r-2}\wedge dx_{r-1}\wedge dx_r$, where $A$ is the $(r-3)\times (r-3)$ matrix with entries $a_{ij}, 1\le i,j\le r-3$.  Choosing a different set of generators \ccgreen{for the lattice} in the definition of $\Omega$ can only change $\Omega|_{X_\sigma}$ by a sign as remarked earlier.  However, since $\{v_{r-2},v_{r-1},v_r\}$ is a basis for $N$, \ccgreen{for each $1\le i\le n$ we can uniquely write 
\begin{equation*}
v_i=c_{i,r-2}v_{r-2}+c_{i,r-1}v_{r-1}+c_{i,r}v_r.
\end{equation*}
Then choosing
\begin{equation}\label{eq:latticegen}
\lambda_i=(0,\ldots,0,1,0\ldots0,-c_{i,r-2},-c_{i,r-1},-c_{i,r}),\ i=1,\ldots,r-3
\end{equation}
as generators for the lattice (\ref{eqn:Lambda}), we see that $A$ is the indentity matrix, so that}
$\operatorname{det}(A)=1$.  \ccgreen{In (\ref{eq:latticegen}), the entry 1 is in the $i$\textsuperscript{th} position.}
\end{proof}

Now fixing a choice of $\Omega$ on $\omega_S$, for each maximal cone $\sigma$ we choose coordinates $(x,y,z)$ on $X_\sigma$ so that Lemma~\ref{lem:plus1} holds, noting that the coordinates $(x,y,z)$ can undergo a cyclic permutation and still satisfy Lemma~\ref{lem:plus1}.  We repeat the description of $\operatorname{Hilb}^n(\mathbb{C}^3)$ in this context, introducing endomorphisms $X_\sigma,Y_\sigma,Z_\sigma\in Hom(V_n,V_n)$ and putting $\operatorname{NHilb}(X_\sigma)=Hom(V_n,V_n)^3\times V_n\sslash GL_n$ as before. This determines the data $(\operatorname{Hilb}^n(X_{\sigma}), \operatorname{NHilb}^n(X_{\sigma}), W_\sigma, i_\sigma)$, where $W_\sigma$ is the function on $\operatorname{NHilb}(X_\sigma)$  given by $\operatorname{tr}([X_\sigma,Y_\sigma]Z_\sigma)$.   Since $W_\sigma$ is unchanged by a cyclic permutation of $(X_\sigma,Y_\sigma,Z_\sigma)$, we conclude that $(\operatorname{Hilb}^n(X_{\sigma}), \operatorname{NHilb}^n(X_{\sigma}), W_\sigma, i_\sigma)$ only depends on $\sigma$.

\ccgreen{Put $R_\sigma=\operatorname{Hilb}^n(X_\sigma)$ and $U_\sigma=\operatorname{NHilb}^n(X_\sigma)$. Using the gluing data of the $X_\sigma$'s, we see that the kernels of the maps
	\begin{equation*}
		\frac{i^{-1}(O_{R_\sigma})}{I^2_{R_\sigma, U_\sigma}}\rightarrow \frac{i^{-1}(T^*R_\sigma)}{I_{R_\sigma, U_\sigma}\cdot i^{-1}(T^*R_\sigma)}
		\end{equation*}
	in Theorem \ref{thm1} glue to a global sheaf $\mathcal{T}$ on $\cup_{\sigma}^n(X_\sigma)$. This sheaf is isomorphic to the sheaf $\mathcal{S}_{\cup_{\sigma}^n(X_\sigma)}$ by the construction of the sheaf $\mathcal{S}$ in \cite{Joy15}.}

\ccgreen{Let $s_\sigma\in \mathcal{S}^0_{\operatorname{Hilb}(X_\sigma)}$ be the section induced by $W_\sigma$.}

\begin{proposition}\label{prop:opendcrit}
\begin{sloppypar}
Given maximal cones $\sigma$ and $\sigma'$, the sections $s_\sigma$ and $s_{\sigma'}$ agree on $\operatorname{Hilb}^n(X_\sigma)\cap \operatorname{Hilb}^n(X_{\sigma'})$, \ccgreen{understood as a subset of $\operatorname{Hilb}^n(\omega_S)$}.
\end{sloppypar}
\end{proposition}

We therefore have a canonical d-critical locus structure on
 $\cup_{\sigma}\operatorname{Hilb}^n(X_\sigma)\subset \operatorname{Hilb}^n(\omega_S)$ with critical charts $(\operatorname{Hilb}^n(X_{\sigma}), \operatorname{NHilb}^n(X_{\sigma}), W_\sigma, i_\sigma)$.

\begin{proof}
We begin by observing that the proposition can be reduced to the case when $\sigma$ and $\sigma'$ share a codimension~1 face.
If $\sigma$ and $\sigma'$ do not share a codimension~1 face, then
we can choose maximal cones $\sigma_1,\sigma_2,\ldots,\sigma_m$ with $m\ge 3$ and $\sigma_1=\sigma$ and $\sigma_m=\sigma'$ such that each pair $\sigma_i,\sigma_{i+1}$ has a common codimension~1 face.  Furthermore, $X_\sigma\cap X_{\sigma'}=X_{\rho}\simeq \mathbb{C}\times(\mathbb{C}^*)^2$, where $\rho$ is the 1-dimensional cone generated by $v_r=(0,0,1)$ in the notation (\ref{eq:edges}).  Since $\rho$ is a face of every $\sigma_i$, we have $X_\rho\subset X_{\sigma_i}$ for each $i$.  Correspondingly, $\operatorname{Hilb}^n(X_\sigma)\cap \operatorname{Hilb}^n(X_{\sigma'})=\operatorname{Hilb}^n(X_\rho)\subset \operatorname{Hilb}^n(X_{\sigma_i})$ for all $i$. \ccred{Here the intersection of the Hilbert schemes are taken inside $\operatorname{Hilb}^n(\omega_n)$.} Once we have proven the proposition for cones sharing a codimension~1 face, it will follow immediately that on $\operatorname{Hilb}^n(X_\rho)$ we have $s_\sigma=s_{\sigma_2}=s_{\sigma_3}=\ldots=s_{\sigma'}$ as required.

The common codimension~1 face of $\sigma$ and $\sigma'$ is spanned by $v_r$ and some $v_i=(v_i',1)$ for $i<r$.  The vector $v_i'\in N'$ corresponds to a torus-invariant curve $D'_i$ in the toric surface $S$.  We let $\eta$ be the self-intersection of $D'_i$ in $S$, so that $v'_{i-1}+\eta v'_i+v'_{i+1}=0$.\footnote{If $i=1$ or $i=r-1$, we cyclically reorder the $v'_j$ so that $v'_{i+1}$ makes sense.}
It follows that $v_{i-1}+\eta v_i+v_{i+1}=(\eta+2)v_r$.  Using the basis $\{v_{i-1},v_i,v_r\}$ for $N$ to identify $N$ with $\mathbb{Z}^3$, we have, reordering $\sigma$ and $\sigma'$ if necessary
\begin{equation*}
\sigma=\mathrm{span}\{(1,0,0),(0,1,0),(0,0,1)\},\ \sigma'=\mathrm{span}\{(0,1,0),(-1,-\eta,\eta+2),(0,0,1)\}.
\end{equation*}

The coordinate change between $X_\sigma$ and $X_{\sigma'}$ is given by 
\begin{equation}\label{eq:change}
(x',y',z')=(x^{-\eta}y,x^{-1},x^{\eta+2}z),
\end{equation}
\ccgreen{as is easily computed.}
We have ordered the coordinates so that $dx'\wedge dy'\wedge dz'=dx\wedge dy \wedge dz$.  If $\Omega|_{X_\sigma}=-dx\wedge dy\wedge dz$, we simply reorder the coordinates appropriately without impacting the rest of the argument.

We next transfer the construction of Section~\ref{subsec:} to $X_{\sigma}$ and to $X_{\sigma'}$ while providing some more detail.
We let $X,Y$ and $Z$ be three $n\times n$ matrices with indeterminate entries $X(k, l)$, $Y(k, l)$ and $Z(k, l)$, representing multiplication by $x$, $y$ and $z$, respectively. Let $u_1,..., u_n$ be $n$ additional indeterminates and put 
\begin{equation*}
A=\mathbb{C}[X(k, l), Y(k, l), Z(k, l), u_1,...,u_n].
\end{equation*} 
Denote by $U$ the open subset $(\operatorname{Spec}(A))^{ss}$ of $\operatorname{Spec}(A)$ with respect to the natural $GL_n$ \ccgreen{action, the semistable locus in $\operatorname{Spec}(A)$ with respect to $\chi$ as in Section 2.2}. Put $N=U/GL(n)$.  Put $d=\operatorname{det}(X)$ and let $A_d=A[d^{-1}]$. 
We denote $U\cap SpecA_{d}$ by $U_{d}$. We similarly define 
\begin{equation*}
A'=\mathbb{C}[X'(k,l),Y'(k,l),Z'(k,l),u'_1,\ldots,u'_n],
\end{equation*}
$U'=(\operatorname{Spec}(A'))^{ss}$, $N'=U'/GL_n$, $A'_{d'}=A'[\operatorname{det}({X'})^{-1}]$, and $U'_{d'}=U'\cap SpecA_{d'}$.

\ccgreen{Noting that the entries of $X^{-1}$ are in $A_d$ and the entries of $(Y')^{-1}$ are in $A'_{d'}$, we can construct} an isomorphism 
\begin{equation}
\label{equ3.4}
\phi: A'_{d'}\simeq A_{d}
\end{equation}
which identifies
\begin{equation*}
X'=X^{-\eta}Y,
Y'=X^{-1}, 
Z'=X^{\eta+2}Z,
u'_i=u_i,\ i=1,\ldots n
\end{equation*}
induced by the change of coordinates (\ref{eq:change}) after making a choice in the order of matrix multiplication. This induces the isomorphism of open subsets:
\begin{equation}
\label{equ3.5}
U_{d}\cap \phi^{-1}(U_{d'})\simeq \phi(U_{d})\cap U'_{d'},
\end{equation}
where we have abused notation slightly by using $\phi$ to also denote the induced map $\operatorname{Spec}(A_{d})\to \operatorname{Spec}(A'_{d'})$.

To simplify notation, we put $R=\operatorname{Hilb}^n(X_\sigma)$ and $R'=\operatorname{Hilb}^n(X_{\sigma'})$.  Let $i:R\to N$ and $i':R'\to N'$ be the inclusions.  Let $R\cap R'$ be the intersection of $R$ and $R'$ taken inside $\operatorname{Hilb}^n(\omega_S)$. Note that $U_{d}$ and $U'_{d'}$ are invariant under the $GL_n$ actions. Hence $(U_{d}\cap \phi^{-1}(U'_{d'}))/GL_n$ is an open neighborhood of $R\cap R'$ in $U_{d}/GL_n\subset N$, and $(\phi(U_{d})\cap U')/GL_n$ is an open neighborhood of $R\cap R'$ in $U'_{d"}/GL_n\subset N'$. Then (\ref{equ3.5}) induces an isomorphism
\begin{equation}
\label{equ3.6}
i^{-1}(O_{N})/I_{R, N}^2|_{R\cap R'}\simeq {i'}^{-1}(O_{N'})/I_{R', N'}^2|_{R\cap R'},
\end{equation}
\ccred{where $i^{-1}(O_N)$ and $I_{R, N}$ are as defined in Theorem \ref{thm1}.}

We have the exact sequence of sheaves on $N$
\begin{equation*}
0\to K\to i^{-1}(O_{N})/I_{R, N}^2\xrightarrow{d} i^{-1}(T^*N)/I_{R, N}\cdot i^{-1}(T^*N)
\end{equation*}
where $K$ is defined as the kernel, and is naturally identified with the sheaf $\mathcal{S}_{R}$.  We have a similar exact sequence defining a sheaf $K'$ on $R'$.

We deduce the following commutative diagram, where the vertical arrows are isomorphisms 
\begin{equation}
\label{eq:isomk}
\begin{array}{ccccccc}
0&\to& \left(K\right)|_{R\cap R'}&\to& \left(i^{-1}(O_{N})/I_{R, N}^2\right)|_{R\cap R'}&\xrightarrow{d}& \left(i^{-1}(T^*N)/I_{R, N}\cdot i^{-1}(T^*N)\right)|_{R\cap \ccgreen{R'}}\\
&&\downarrow&&\downarrow&&\downarrow\\
0&\to& \left(K'\right)|_{R\cap R'}&\to& \left({i'}^{-1}(O_{N'})/I_{R', N'}^2\right)|_{R\cap R'}&\xrightarrow{d}& \left({i'}^{-1}(T^*N')/I_{R', N'}\cdot {i'}^{-1}(T^*N')\right)|_{R\cap R'}\ccgreen{.}
\end{array}
\end{equation}
We show that the sections $s_\sigma\in H^0(K)$ and $s_{\sigma'}\in H^0(K')$ agree on $R\cap R'$ according to the leftmost isomorphism in (\ref{eq:isomk}).  For this, it suffices to show that their images in $i^{-1}(O_{N})/I_{R, N}^2$ and ${i'}^{-1}(O_{N'})/I_{R', N'}^2$ agree on $R\cap R'$ via the isomorphism (\ref{equ3.6}).  We proceed to verify this equality.

We have the potentials $W_\sigma=\operatorname{tr}([X,Y]Z])$ on $U$ and $W_{\sigma'}=\operatorname{tr}([X',Y']Z])$ on $U'$. 
Denote the image of $i^{-1}(W_\sigma)$ in the sheaf $i^{-1}(O_{N})/I_{R, N}^2$ by $i^{-1}(W_\sigma)+I_{R, N}^2$, with analagous notation for ${i'}^{-1}(W_{\sigma'})$. Then we claim that:

\begin{equation}\label{eq:compatible}
i^{-1}(W_\sigma)+I_{R, N}^2|_{R\cap R'}={i'}^{-1}(W_{\sigma'})+I_{R', N'}^2|_{R\cap R'}\rm{\  under\ the\ identification\ (\ref{equ3.6})}.
\end{equation} 
This is precisely the assertion of the proposition that the sections $s_\sigma$ and $s_{\sigma'}$ agree on $R\cap R'$.

We now demonstrate (\ref{eq:compatible}).
Under the isomorphism (\ref{equ3.4}), we have 
\begin{equation*}
W_\sigma|_{\operatorname{Spec}(A_{d})}=\operatorname{tr}(XYZ-YXZ)
\end{equation*}
\begin{equation}\label{eq:w'}
W_{\sigma'}|_{\operatorname{Spec}(A'_{d'})}=\operatorname{tr}(X^{-\eta}YX^{\eta+1}Z-X^{-(\eta+1)}YX^{\eta+2}Z).
\end{equation}
Making use of the identity \ccgreen{$\operatorname{tr}(BC)=\operatorname{tr}(CB)$ for $n\times n$ matrices $B$ and $C$}, we can rewrite (\ref{eq:w'}) as
\begin{equation}\label{eq:w'2}
W_{\sigma'}|_{\operatorname{Spec}(A'_{d'})}=\operatorname{tr}(X^{\eta+1}ZX^{-\eta}Y-X^{\eta+2}ZX^{-(\eta+1)}Y).
\end{equation}
Since (\ref{eq:w'2}) can be obtained from (\ref{eq:w'}) by exchanging \cgreen{$Y$ and $Z$}, replacing $\eta$ by $-\eta-2$, and multiplying by $-1$, we can and will assume that $\eta\ge -1$.  If $\eta=-1$, then $W_\sigma|_{\operatorname{Spec}(A_{d})}=W_{\sigma'}|_{\operatorname{Spec}(A'_{d'})}$.  If $\eta=0$ we compute
\begin{equation}\label{eq:f0}
W_\sigma|_{\operatorname{Spec}(A_{d})}-W_{\sigma'}|_{\operatorname{Spec}(A'_{d'})}=\operatorname{tr}\left(XYZ-2YXZ+X^{-1}YX^2Z\right)=\operatorname{tr}\left(X^{-1}[X,Y][Z,X]\right),
\end{equation}
which is visibly in $I_{R, N}^2|_{R\cap R'}$.

We proceed by induction on \ccgreen{$\eta$}, assuming the inductive hypothesis
\begin{equation}\label{eq:f}
\operatorname{tr}\left(\left(X^{-\eta}YX^{\eta+1}Z-X^{-(\eta+1)}YX^{\eta+2}Z\right)-\left(XYZ-YXZ\right)\right)\in I_{R, N}^2|_{R\cap R'}.
\end{equation}
The substitution $(X,Y,Z)\mapsto (X,YX,Z)$ preserves $I_{R, N}$ and hence $I_{R,N}^2$, since
\begin{equation*}
[X,YX]=\ccgreen{[X,Y]X},\ [Z,YX]=[Z,Y]X+Y[Z,X].
\end{equation*}
Making this substitution in (\ref{eq:f}) we get
\begin{equation}\label{eq:induct}
\operatorname{tr}\left(\left(X^{-\eta}YX^{\eta+2}Z-X^{-(\eta+1)}YX^{\eta+3}Z\right)-\left(XYXZ-YX^2Z\right)\right)\in I_{R, N}^2|_{R\cap R'}.
\end{equation}
We write
\begin{equation*}
X^{-(\eta+1)}YX^{\eta+2}Z-X^{-(\eta+2)}YX^{\eta+3}Z=X^{-1}\left(X^{-\eta}YX^{\eta+2}Z-X^{-(\eta+1)}YX^{\eta+3}Z
\right),
\end{equation*}
so that we have by (\ref{eq:induct}) modulo $I_{R, N}^2|_{R\cap R'}$
\begin{equation*}
\operatorname{tr}(X^{-(\eta+1)}YX^{\eta+2}Z-X^{-(\eta+2)}YX^{\eta+3}Z)\equiv \operatorname{tr}(YXZ-X^{-1}YX^2Z)
\end{equation*}
and therefore 
\begin{equation}\label{eq:almostplus1}
\operatorname{tr}\left(\left(X^{(-\eta+1)}YX^{\eta+2}Z-X^{-(\eta+2)}YX^{\eta+3}Z\right)-\left(YXZ-X^{-1}YX^2Z\right)\right)\in I_{R, N}^2|_{R\cap R'}.
\end{equation}
We also have
\begin{equation*}
\operatorname{tr}\left(XYZ-2YXZ+X^{-1}YX^2Z\right)\in I_{R, N}^2|_{R\cap R'}.
\end{equation*}
by (\ref{eq:f0}).   Subtracting this last equation from (\ref{eq:almostplus1}), we obtain (\ref{eq:f}) with $\eta$ replaced by $\eta+1$, proving the inductive step, and we are done.
\end{proof}

We now come to the main result of this section.

\begin{theorem}\label{thm:dcrit}
Suppose that $S=\mathbb{P}^2$ or $S=\mathbb{F}_n$.  Then $\operatorname{Hilb}^n(\omega_S)$ has a natural d-critical locus structure. 
The critical charts are all isomorphic to $(\operatorname{Hilb}^n(\mathbb{C}^3),\operatorname{NHilb}^n(\mathbb{C}^3),W,i)$.
\end{theorem}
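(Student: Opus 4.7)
The plan follows the two-step outline given immediately after Proposition~\ref{prop:opendcrit}.

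First, I would exhibit an open cover $\{U_\alpha\}$ of $\omega_S$ by subsets isomorphic to $\mathbb{C}^3$ such that the induced $Hilb^n(U_\alpha)$ cover $Hilb^n(\omega_S)$. For $S=\mathbb{P}^2$, each $[Z] \in Hilb^n(\omega_{\mathbb{P}^2})$ projects under $\pi$ to finitely many points of $\mathbb{P}^2$, so some line $L$ avoids $\pi(Z)$; the open set $U_L := \pi^{-1}(\mathbb{P}^2 \setminus L) \simeq \mathbb{C}^3$ contains $Z$, and letting $L$ vary over all lines produces the required cover. For $S=\mathbb{F}_n$, an analogous construction works by using the $PGL_2$-action on the base $\mathbb{P}^1$ together with fiberwise translations in the $\mathbb{P}^1$-fibration $\mathbb{F}_n \to \mathbb{P}^1$ to move any prescribed finite set of points into a standard affine $\mathbb{C}^2$-chart.

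Second, for each $U_\alpha$ I would choose an automorphism of $S$ lifted to an $\Omega$-preserving automorphism $\phi_\alpha$ of $\omega_S$ carrying a standard toric chart $X_{\sigma_{j_\alpha}}$ onto $U_\alpha$ (for $\mathbb{P}^2$, by lifting an element of $PGL_3$ to $SL_3$). Transporting the critical chart on $X_{\sigma_{j_\alpha}}$ through $\phi_\alpha$ yields data $(Hilb^n(U_\alpha), NHilb^n(U_\alpha), W_\alpha, i_\alpha)$ which is by construction isomorphic to $(Hilb^n(\mathbb{C}^3), NHilb^n(\mathbb{C}^3), W, i)$, together with a section $s_\alpha \in H^0(\mathcal{S}^0_{Hilb^n(U_\alpha)})$. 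Independence of the choice of $\phi_\alpha$ follows from the Remark in Section~\ref{subsec:hilb}: two such lifts differ by an $\Omega$-preserving self-automorphism of $X_{\sigma_{j_\alpha}} \simeq \mathbb{C}^3$ whose action on matrix coordinates leaves $W = \mathrm{tr}([X,Y]Z)$ invariant.

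The main obstacle is pairwise compatibility of the sections $s_\alpha, s_\beta$ on each overlap $Hilb^n(U_\alpha \cap U_\beta)$, and this is where I expect the real work. The strategy is to reduce to Proposition~\ref{prop:opendcrit} via an auxiliary intermediate chart. Given any $[Z]$ in the overlap, one can find a third chart $U_\gamma$ in the cover that still contains $Z$ and is obtained from a standard toric chart by an $\Omega$-preserving automorphism of $\omega_S$; the compatibility of $s_\alpha$ with $s_\gamma$, and separately of $s_\beta$ with $s_\gamma$, then each reduces to a statement about two $\mathbb{C}^3$-charts of $\omega_S$ carrying the same Calabi-Yau form. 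The inductive matrix-trace argument in the proof of Proposition~\ref{prop:opendcrit} uses only this coincidence of forms together with the commutator structure of the potentials, and should extend to cover the general $\Omega$-preserving transitions arising here. Transitivity through $s_\gamma$ then yields $s_\alpha = s_\beta$ on the overlap, completing the construction of the d-critical locus structure and showing that every critical chart is isomorphic to $(Hilb^n(\mathbb{C}^3), NHilb^n(\mathbb{C}^3), W, i)$.
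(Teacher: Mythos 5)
Your overall architecture matches the paper's: cover $S$ by open sets whose preimages are carried onto toric charts $X_{\sigma}$ by $\Omega$-preserving automorphisms of $\omega_S$, transport the chart $(Hilb^n(\mathbb{C}^3),NHilb^n(\mathbb{C}^3),W,i)$, and check pairwise compatibility of the resulting sections. For $S=\mathbb{P}^2$ your argument is essentially complete and is what the paper does: a single determinant-$1$ element of $SL_3$ moves any two lines to two coordinate lines simultaneously, so compatibility reduces directly to Proposition~\ref{prop:opendcrit} with no intermediate chart needed.

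The genuine gap is in the $\mathbb{F}_n$ case, precisely where you write that the inductive trace argument of Proposition~\ref{prop:opendcrit} ``should extend to cover the general $\Omega$-preserving transitions.'' That proposition only treats \emph{monomial} (toric) coordinate changes $(x',y',z')=(x^{-f}y,x^{-1},x^{f+2}z)$, and its induction is specific to that form. The cover of $\mathbb{F}_n$ forces non-toric charts: the complement of a fiber and a section $S_{a,b,e}$ in the linear system $|D_4'|$, and two such charts over the same base point differ by a shear automorphism $x_4\mapsto x_4+e''x_2x_3^n$, giving the transition $(x_1',x_2',p')=(x_1,\,x_2(1+e''x_2)^{-1},\,(1+e''x_2)^2p)$. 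Your intermediate-chart trick cannot reduce this to Proposition~\ref{prop:opendcrit}: one can check that no automorphism of $\omega_{\mathbb{F}_n}$ carries two overlapping charts $S_{a,b,e}$, $S_{a,b,e'}$ with $e\neq e'$ simultaneously to toric charts, so at least one transition in any chain remains a shear. The paper therefore has to do a separate explicit computation for this case, showing that
\begin{equation*}
\mathrm{tr}\bigl([X_1',X_2']P'-[X_1,X_2]P\bigr)=\mathrm{tr}\bigl([X_1(1+e''X_2),X_2(1+e''X_2)^{-1}]\,[X_1,P]\bigr)
\end{equation*}
lies in the square of the commutator ideal; this step also involves an ordering choice when promoting the rational coordinate change to matrices, which is exactly the kind of issue that cannot be waved away by ``coincidence of forms plus commutator structure.'' Supplying this computation (and then combining it with Proposition~\ref{prop:opendcrit} via a third chart when the removed fibers differ, as you propose) is what completes the proof; without it your argument does not close for $\mathbb{F}_n$. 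A smaller point: your appeal to the Remark of Section~\ref{subsec:hilb} for independence of the choice of $\phi_\alpha$ only covers linear determinant-$1$ changes of coordinates, whereas two lifts can differ by a nonlinear $\Omega$-preserving self-map of the chart, so that step also needs more care (or, as in the paper, can be avoided by simply fixing choices).
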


\begin{proof}

Let $\pi:\omega_S\to S$ be the projection.  Our strategy is to exhibit an open cover $\{U_\alpha\}$ of $S$ satisfying the two properties:
\begin{enumerate}
\item There exists an automorphism of $\omega_S$ which preserves $\Omega$ and takes $V_\alpha:=\pi^{-1}(U_\alpha)$ to $X_{\sigma_\alpha}$ for some maximal cone $\sigma_\alpha$.
\item Any finite subset of $S$ is contained in some $U_\alpha$.
\end{enumerate}
The first condition allows us to define charts
\begin{equation*}
(\operatorname{Hilb}^n(V_\alpha),\operatorname{NHilb}^n(V_\alpha),W_\alpha,i_\alpha)
\end{equation*}
\ccgreen{by transferring the chart $(\operatorname{Hilb}^n(X_{\sigma_\alpha})),\operatorname{NHilb}^n(X_{\sigma_\alpha}),W_{\sigma_\alpha},i_{\sigma_\alpha})$ from $\operatorname{Hilb}^n(X_{\sigma_\alpha})$ to $\operatorname{Hilb}^n(V_\alpha)$ via this} automorphism.   The second condition implies that $\{\operatorname{Hilb}^n(V_\alpha)\}$ is an open cover of $\operatorname{Hilb}^n(\omega_S)$. To see this, let $Q\in \operatorname{Hilb}^n(\omega_S)$.  Thinking of $Q$ as a length $n$ subscheme of $\omega_S$, we choose a $U_\alpha$ containing the finite set $\pi(Q)$.  Then $Q\subset V_\alpha$, so that $Q\in \operatorname{Hilb}^n(V_\alpha)$.

The rest of the proof consists of choosing the $U_\alpha$ and automorphisms so that the sections $s_\alpha\in S^0_{\operatorname{Hilb}^n(V_\alpha)}$ agree on pairwise intersections.
We work out the details for $S=\mathbb{P}^2$ and $S=\mathbb{F}_n$ separately.

\smallskip\noindent
$S=\mathbb{P}^2$:  For a line $\ell\subset \mathbb{P}^2$, we let $U_\ell=\mathbb{P}^2-\ell$ be its complement.  Then $\{U_\ell\}$ is an open cover of $\mathbb{P}^2$ which clearly satisfies the second property.   To check the first property, fix an equation for $\ell$ and let $A$ be a determinant~1 linear transformation of $\mathrm{span}(x_1,x_2,x_3)$ which induces an automorphism of $\mathbb{P}^2$  taking $\ell$ to one of the torus invariant lines.
In terms of the homogeneous coordinates $(x_1,x_2,x_3,p)$ for $\omega_{\mathbb{P}^2}$, the automorphisms we consider are  $\phi_A(\mathbf{x},p)=(A\mathbf{x},p)$ where $\mathbf{x}=(x_1,x_2,x_3)$.    
 Since both $\widehat{\Omega}=dx_1\wedge dx_2\wedge dx_3\wedge dp$ and $v$ are invariant under $A$, it follows that $\Omega$ is invariant under the automorphisms $\phi_A$.

Given any pair of lines $\ell,\ell'$, we can find \ccgreen{a determinant $1$ linear transformation $A$ taking} $\ell$ and $\ell'$ to $x_1$ and $x_2$ respectively. Then $\phi_A$ identifies
$\operatorname{Hilb}^n(V_\ell)\cap \operatorname{Hilb}^n(V_{\ell'})$ with $\operatorname{Hilb}^n(X_{\sigma_1})\cap \operatorname{Hilb}^n(X_{\sigma_2})$ using the labeling of the cones $\sigma_i$ introduced in the example earlier in this section. \ccgreen{As in the discussion before Proposition \ref{prop:opendcrit}, using the gluing data between $\operatorname{Hilb}^n(U_\ell)$ and $\operatorname{Hilb}^n(U_{\ell'})$ induced by the change of coordinates between $U_\ell$ and $U_{\ell'}$, the local sheaves $\mathcal{S}_{\operatorname{Hilb}^n(U_\ell)}$ glue to the sheaf $\mathcal{S}_{\operatorname{Hilb}(\omega_{\mathbb{P}^2})}$. Then by Remark \ref{lineartrans},} the compatibility of the \ccred{local sections associated to the }critical charts of $\operatorname{Hilb}^n(V_\ell)$ and $\operatorname{Hilb}^n(V_{\ell'})$ follows immediately from the compatibility of the critical charts of $\operatorname{Hilb}^n(X_{\sigma_1})$ and $\operatorname{Hilb}^n(X_{\sigma_2})$ proven in Proposition~\ref{prop:opendcrit}.

\smallskip\noindent
$S=\mathbb{F}_n$:  We describe $\mathbb{F}_n$ as the toric surface associated to the complete fan with 1-dimensional cones spanned by the vectors
\begin{equation}
v'_1=(1,0),\ v'_2=(0,1),\ v'_3=(-1,n),\ v'_4=(0,-1).
\end{equation}
The torus-invariant curves $D'_1$ and $D'_3$ are fibers of $\mathbb{F}_n$, the curve $D'_2$ is the section with $(D'_2)^2=-n$, and $D'_4$ is a section with $(D'_4)^2=n$.  The lattice of vectors fields associated to the quotient construction of $\omega_{\mathbb{F}_n}$ is generated by
\begin{equation*}
\chi_1\!=\!x_2(\partial/\partial x_2)\! +\! x_4(\partial/\partial x_4)\! -\! 2p(\partial/\partial p), \chi_2\!=\!x_1(\partial/\partial x_1)\! +\! x_3(\partial/\partial x_3)\! +\! nx_4(\partial/\partial x_4)\! -\! (n+2)p(\partial/\partial p).
\end{equation*}

The automorphisms of $\omega_{\mathbb{F}_n}$ which we use are of two types, described in terms of homogeneous coordinates by
\begin{enumerate}
\item $(x_1,x_2,x_3,x_4,p)\mapsto (ax_1+bx_3,x_2,cx_1+dx_3,x_4,p)$, where $ad-bc=1$
\item $(x_1,x_2,x_3,x_4,p)\mapsto (x_1,x_2,x_3,x_4+ex_2x_3^n,p)$.
\end{enumerate}

These automorphisms preserve $\widehat{\Omega}$ and the $\chi_i$, hence they preserve $\Omega$ as well.

Let $\phi:\mathbb{F}_n\to \mathbb{P}^1$ be the map exhibiting $\mathbb{F}_n$ as a $\mathbb{P}^1$-bundle over $\mathbb{P}^1$.   In homogeneous coordinates, we have $\phi(x_1,x_2,x_3,x_4)=(x_1,x_3)$.   

Given a point $p=(a,b)\in \mathbb{P}^1$,
we have a 1-parameter family of sections of $S_{a,b,e}$ of $\mathbb{F}_n$ described parametrically  as $\{(x_1,1,x_3,e(bx_1-ax_3)^n)\mid (x_1,x_3)\in \mathbb{P}^1\}$.
Intrinsically, these sections are the irreducible members of the linear system $|D'_4|$ which intersect $D'_4$ only in the point $(a,1,b,0)$ of $D'_4$, with multiplicity $n$.  Since $S_{\rho a,\rho b,\rho^{-n}e}=S_{a,b,e}$ \ccred{for any $\rho\in \mathbb{C}^*$}, we see that this 1-parameter family of sections only depends on $p$ and not on a choice of homogenous coordinates $(a,b)$ of $p$.     We use the cover $\{U_{a,b,e}\}=\{\phi^{-1}(\mathbb{P}^1-(a,b))-S_{a,b,e}\}$ of $\mathbb{F}_n$.  Let $V_{a,b,e}=\pi^{-1}(U_{a,b,e})$. \ccgreen{Similar to the case of $\mathbb{P}^2$, the local sheaves $\mathcal{S}_{\operatorname{Hilb}^n(U_{a, b, e})}$ glue to a global sheaf on $\operatorname{Hilb}^n(\omega_{\mathbb{F}_n})$ via the change of coordinates between the $U_{a, b, e}\subset \mathbb{F}_n$.}

To verify the first property required of $\{U_\alpha\}$, we can find an automorphism of the first type mapping $V_{a,b,e}$ to $V_{1,0,e'}$.  Then an automorphism of the second type maps $V_{1,0,e'}$ to $V_{1,0,0}$.  But $V_{1,0,0}$ is equal to $X_\sigma$, where $\sigma$ is the maximal cone generated by $\{v_1=(1,0,1),v_2=(0,1,1),$ \ccred{$v_5=(0,0,1)\}$}.   We use these automorphisms to identify the function $W_\sigma$ on $\operatorname{NHilb}^n(X_\sigma)$ with a function $W_{a,b,e}$ on $\operatorname{NHilb}^n(V_{a,b,e})$.  Let $s_{a,b,e}$ be the section of $S^0_{^n(V_{a,b,e})}$ induced by $W_{a,b,e}$.


Given a finite subset $T$ of $\mathbb{F}_n$, we first choose $p=(a,b)\in \mathbb{P}^1-\phi(T)$.  Then we can find an $e$ such that $S_{a,b,e}\cap T$ is empty, so that $T\subset U_{a,b,e}$.  This verifies the second property required of $\{U_\alpha\}$.

It remains to show pairwise compatibility of the sections $s_{a,b,e}$ of $S^0_{\operatorname{Hilb}^n(V_{a,b,e})}$.  Consider distinct open subsets $V_{a,b,e}$ and $V_{a',b',e'}$.  There are two cases to consider:
\begin{enumerate}
\item $(a,b)= (a',b')$ as points of $\mathbb{P}^1$, or
\item $(a,b)\ne(a',b')$ as points of $\mathbb{P}^1$.
\end{enumerate}
In the first case, we can find a determinant 1 linear transformation of $\mathrm{span}(x_1,x_3)$ taking $(a,b)$ to $(1,0)$.  Then we use an automorphism of the second type to take $V_{a,b,e}$ to $V_{1,0,0}$.  This automorphism takes $V_{a',b',e'}$ to $V_{1,0,e''}$ for some $e''\ne0$.  We are reduced to comparing $V_{1,0,0}$ and $V_{1,0,e''}$.  On $V_{1,0,0}$ we have affine coordinates $(x_1,x_2,p)$ following (\ref{eq:affinecoordinates}).  Our construction gives us affine coordinates $(x_1',x_2',p')$ on $U_{1,0,e''}$.  We relate $(x_1',x_2',p')$ to $(x_1,x_2,p)$ using full homogenous coordinates.  We have
\begin{equation*}
(x_1',x_2',1,1,p)=(x_1,x_2,1,1+e''x_2,p)=(x_1,x_2(1+e''x_2)^{-1},1,1,(1+e''x_2)^2p),
\end{equation*}
where we have used the $(\mathbb{C}^*)^2$ of the quotient construction in the last step.  This gives the coordinate change as
$(x_1',x_2',p')=(x_1,x_2(1+e''x_2)^{-1},(1+e''x_2)^2p)$.  Promoting $x_1,x_2,p,x_1',x_2',p'$ to elements of $End(V_n)$, we compute

\begin{equation*}
[X_1',X_2']P'-[X_1,X_2]P=[X_1,X_2(1+e''X_2)^{-1}](1+e''X_2)^2P-[X_1,X_2]P,
\end{equation*}
which simplifies to 
\begin{equation*}
e''X_1X_2^2P+X_2X_1P-X_2(1+e''X_2)^{-1}X_1(1+e''X_2)^2P.
\end{equation*}
\ccgreen{Using the cyclic property of the trace and freely commuting terms involving $X_2$ only, we see that this last expression has the same trace as that of
\begin{equation*}
[X_1(1+e''X_2),X_2(1+e''X_2)^{-1}]\ [1+e''X_2,P].
\end{equation*}}
We only have to show that this trace is in the square of the ideal $I$  generated by the entries of the commutators of $X_1,X_2$, and $P$, appropriately localized so that $1+e''X_2$ is invertible.  We only have to show that the entries of $[X_1(1+e''X_2),X_2(1+e''X_2)^{-1}]$ are in $I$, or equivalently that $X_1(1+e''X_2)X_2(1+e''X_2)^{-1}$ is congruent to $X_2(1+e''X_2)^{-1}X_1(1+e''X_2)$ modulo $I$.  We can do this by commuting adjacent terms successively.  For example, to show that  $(1+e''X_2)^{-1}X_1$ is congruent to $X_1(1+e''X_2)^{-1}$ moduli $I$, we only have to observe that $X_1(1+e''X_2)$ is congruent to $(1+e''X_2)X_1$ modulo $I$, and then multiply by $(1+e''X_2)^{-1}$ on the left and on the right.

We abbreviate our notation by putting $H_{a,b,e}=\operatorname{Hilb}^n(V_{a,b,e})$ and $NH_{a,b,e}=\operatorname{NHilb}^n(V_{a,b,e})$. 
Thus $W_{1,0,0}=W_{1,0,e''}$ on $H_{1,0,0}\cap H_{1,0,e''}$, hence $s_{1,0,0}=s_{1,0,e''}$ on $H_{1,0,0}\cap H_{1,0,e''}$ as well.

In the second case, we can find a determinant 1 linear transformation of $\mathrm{span}(x_1,x_3)$ taking $(a,b)$ to $(1,0)$ 
and $(a',b')$ to $(0,1)$.  Then we use an automorphism of the second type to take $V_{a,b,e}$ to $V_{1,0,0}$.  This automorphism takes $V_{a',b',e'}$ to $V_{0,1,e''}$ for some $e''$.  We are reduced to comparing $s_{1,0,0}$ and $s_{0,1,e''}$ on $H_{1,0,0}\cap H_{0,1,e''}$.  

If $e''=0$, we are already done by Proposition~\ref{prop:opendcrit}. For $e''\ne0$, we first note that $H_{1,0,0}\cap H_{0,1,e''}= H_{1,0,0}\cap H_{0,1,0}\cap H_{0,1,e''}$.  It follows that the three sections $s_{1,0,0}$ on $H_{1,0,0}$, $s_{0,1,0}$ on $H_{0,1,0}$, and $s_{0,1,e''}$ on $H_{0,1,e''}$ are all defined on $H_{1,0,0}\cap H_{0,1,e'}$  and so can be compared there.  The first two agree on $H_{1,0,0}\cap H_{0,1,0}$ by Proposition~\ref{prop:opendcrit} as just noted, and for the last two, they agree on $H_{0,1,0}\cap H_{0,1,e''}$ by a straightforward calculation analogous to what was just done above.  It follows that $s_{1,0,0}$ and $s_{0,1,e''}$ agree on $H_{1,0,0}\cap H_{0,1,e''}$ and we are done.

%
\end{proof}

\smallskip\noindent
{\bf Remark.}  Our methods are easily adapted to construct a d-critical locus structure on the Hilbert schemes of local $\mathbb{P}^1$.

\section{Orientation on $(\operatorname{Hilb}^n(\omega_S), s)$}\label{sec:orientation}

In this section, we understand $S$ to mean either $S=\mathbb{P}^2$ or $S=\mathbb{F}_n$.  We use the more generic notations $U_\alpha, V_\alpha, W_\alpha$, and $s_\alpha$ instead of using $U_\ell$ etc.\ for $S=\mathbb{P}^2$ and $U_{a,b,e}$ etc.\ for $S=\mathbb{F}_n$.
We first work out the virtual canonical bundle associated to the d-critical locus $(\operatorname{Hilb}^n(\omega_S), s)$ \ccblue{following the construction in [Dav], see also \cite{Shi18}}.
\begin{proposition}
\label{prop8}
Let $F$ be the universal object on $\operatorname{Hilb}^n(\omega_S)$ and let $\pi_1: \operatorname{Hilb}^n(\omega_S)\times \omega_S\rightarrow \operatorname{Hilb}^n(\omega_S)$ be the projection. Then we have 
\begin{equation}
	\label{vircanon}
K_{\operatorname{Hilb}^n(\omega_S), s}\simeq (\operatorname{det}(\pi_{1*}F)^*)^2.
\end{equation}
\end{proposition}
\begin{proof}
We first prove condition (i) in Definition \ref{defK}. Let $(R, U, f, i) = (\operatorname{Hilb}^n(V_\alpha), N_\alpha, i_\alpha, W_\alpha)$ be one of the critical charts of $(\operatorname{Hilb}^n(\omega_S), s)$ constructed in the proof of Theorem~\ref{thm:dcrit}. It may be more clear if we use the quiver description of $U = N_\alpha$. It is well known \cblue{(see e.g.\cite{BBS13})} that $\operatorname{NHilb}^n(\mathbb{C}^3)$ is isomorphic to the moduli space of stable representations \ccgreen{$(V_\infty,V_0)$} of the following quiver:
\begin{center}
\begin{tikzpicture}
    
\node [circle, minimum size=0.1pt,inner sep=0pt,outer sep=0pt] (a) at (0,0) {$s_\infty$};
\node[right=2cm of a, circle, minimum size=0.1pt,inner sep=0pt,outer sep=0pt] (b) {$s_0$} edge [in=-50,out=-120,loop, scale=3] node[midway, fill=white, scale=0.8]{x} ();
\node[right=2cm of a,circle,  minimum size=0.1pt,inner sep=0pt,outer sep=0pt] (b) {$s_0$} edge [in=-10,out=50,loop, scale=3] node[midway, fill=white, scale=0.8]{y} ();
\node[right=2cm of a, circle,  minimum size=0.1pt,inner sep=0pt,outer sep=0pt] (b) {$s_0$} edge [in=130,out=70,loop, scale=3] node[midway, fill=white, scale=0.8]{z} ();


\draw [->] (a.east) -- (b.west) node[midway, fill=white, scale=0.8]{a} ;

\end{tikzpicture}
\end{center}
with $V_\infty$ of dimension one and $V_0$ of dimension $n$. 
Here the quiver stability condition is equivalent to the condition that for any nonzero $v\in V_\infty$, $a(v)$ generates $V_0$ under the action of $x$, $y$ and $z$. 
\cblue{Let $W\subseteq \text{End}(V_0, V_0)^3\times \text{Hom}(V_\infty, V_0)$ be the locus of stable representations. Then we have \begin{equation*}
		N:=\operatorname{NHilb}^n(\mathbb{C}^3)\simeq W/\text{GL}(n).
	\end{equation*} 
We denote the projection from $W$ to $N$ by $\pi$.  Since $\text{GL}(n)$ acts equivariantly on $V_0$ and $V_\infty$ thought of as trivial bundles  on $W$, these bundles descend to bundles $E_0$ and $E_\infty$ on $N$ with $\pi^*(E_0)\simeq V_0$ and $\pi^*(E_\infty)\simeq V_\infty$ equivariantly.  We consider the short exact sequence
\begin{equation}
\label{cotangentseq}
0\rightarrow \pi^*T^*N\rightarrow T^*W\rightarrow T^*_{W/N}\rightarrow 0
\end{equation}
of bundles on $W$.  Note that $T^*_{W/N}$ is a trivial bundle with fiber $\mathfrak{gl}(n)^*$, on which $GL(n)$ acts by the dual of the adjoint action.  After modding out by $GL(n)$, (\ref{cotangentseq}) descends to an exact sequence of bundles on $N$
\begin{equation*}
0 \rightarrow T^*N \rightarrow G\rightarrow B \rightarrow 0.
\end{equation*}
Also,  $\operatorname{det}(B)$ is a trivial bundle, since the determinant of the adjoint action of $GL(n)$ is trivial.  
This implies that $\operatorname{det}(T^*N)\simeq\operatorname{det}(G)$.
Also \begin{equation}
\label{equcot}
\begin{split}
G&\simeq (E_0^*\otimes E_0)\oplus (E_0^*\otimes E_0)\oplus (E_0^*\otimes E_0)\oplus (E_0^*\otimes E_\infty).\\
\end{split}
\end{equation}
Considering $V_\alpha\simeq \mathbb{C}^3$, we have $(E_0)|_{\operatorname{Hilb}^n(V_\alpha)}\simeq \pi_{1*}F$ and the global section 1 of $\pi_{1*}F$ induces a trivialization of $(E_\infty)|_{\operatorname{Hilb}^n(V_\alpha)}$.
Hence we have a canonical isomorphism 
\begin{equation*}
K_{N_\alpha}^2|_{\operatorname{Hilb}^n(V_\alpha)}\simeq \operatorname{det}\left(\left(E_0^*\right)|_{\operatorname{Hilb}^n(V_\alpha)}\right)^2\simeq (\operatorname{det}(\pi_{1*}F)^*|_{\operatorname{Hilb}^n(V_\alpha)})^2.
\end{equation*}
This shows condition (i).  We denote the inverse of this isomorphism by $\iota_{(\operatorname{Hilb}^n(V_\alpha), N_\alpha, i_\alpha, W_\alpha)}$, with analogous notation for the other charts.}

\ccgreen{Now consider condition (ii) in Definition~\ref{defK}. We need to check that for any embeddings of critical charts \begin{equation*}
		\Phi: (\operatorname{Hilb}^n(V_\alpha)', N_\alpha', i_\alpha', W_\alpha')\rightarrow (\operatorname{Hilb}^n(V_\alpha), N_\alpha, i_\alpha, W_\alpha),
	\end{equation*} we have 
\begin{equation}
	\label{condii}
	\iota_{(\operatorname{Hilb}^n(V_\alpha), N_\alpha, i_\alpha, W_\alpha)}=i_{\alpha}'|_{\operatorname{Hilb}^n(V_\alpha)'}^*(J_\Phi)\circ \iota_{(\operatorname{Hilb}^n(V_\alpha)', N_\alpha', i_\alpha', W_\alpha')},
\end{equation}
where $N_\alpha'$ is a Zariski open subset of $N_\alpha$ and $\operatorname{Hilb}^n(V_\alpha)'=\operatorname{Hilb}^n(V_\alpha)\cap N_\alpha'$.
\cblue{
From the above, we see that $\iota_{(\operatorname{Hilb}^n(V_\alpha), N_\alpha, i_\alpha, W_\alpha)}$ is induced by $E_0|_{\operatorname{Hilb}^n(V_\alpha)}\simeq \pi_{1*}F|_{\operatorname{Hilb}^n(V_\alpha)}$ and $\iota_{(\operatorname{Hilb}^n(V_\alpha)', N_\alpha', i_\alpha', W_\alpha')}$ is induced by $E_0|_{\operatorname{Hilb}^n(V_\alpha)'}\simeq \pi_{1*}F|_{\operatorname{Hilb}^n(V_\alpha)'}$.
The Zariski open immersion $N'_\alpha\rightarrow N_\alpha$ is induced by a change of coordinates as in Theorem \ref{thm:dcrit}. The change of coordinates transforms the matrices associated with the edges $x, y, z$ accordingly, while leaving the universal bundles $E_0$ and $E_\infty$ unchanged. So we naturally obtain (\ref{condii}).}
}
\end{proof}

By Proposition \ref{prop8}, we see that $\operatorname{det}(\pi_{1*}F)^*$ \ccred{with the isomorphism \ref{vircanon}} defines an orientation on $(\operatorname{Hilb}^n(\omega_S), s)$. \ccred{We denote this choice of orientation by $K^{1/2}_{\operatorname{Hilb}^n(\omega), s}$}\ccgreen{. For later use, we also observe that from the calculations above we infer a canonical isomorphism 
\begin{equation}\label{eq:localor}
	\tau_{\operatorname{Hilb}^n(V_\alpha), N_\alpha, i_\alpha, W_\alpha}:K^{1/2}_{\operatorname{Hilb}^n(\omega), s}\rightarrow K_{N_\alpha}|_{\operatorname{Hilb}^n(V_\alpha)}.
\end{equation}
\cblue{induced by $\pi_{1*}F|_{\operatorname{Hilb}^n(V_\alpha)}\simeq E_0|_{\operatorname{Hilb}^n(V_\alpha)}$}
}

\section{Motivic DT invariants for $\ccgreen{\operatorname{Hilb}}^n(\omega_S)$}

In this section we use the d-critical locus structure from Section~\ref{sec:dcrit} and the orientation  from Section~\ref{sec:orientation} to compute the motivic DT invariants of $\ccgreen{\operatorname{Hilb}}^n(\omega_S)$, showing that it agrees with the computation in \cite{BBS13} for $S=\mathbb{P}^2$ and $S=\mathbb{F}_n$.

We first recall some \ccgreen{definitions} used in \ccred{Section 2.5 and Section 3.4} of \cite{BBS13}. Let $X$ be a quasi-projective threefold. Then $\operatorname{Hilb}^n(X)$ admits a stratification 
$$\operatorname{Hilb}^n(X)=\coprod_{\gamma\vdash n}\operatorname{Hilb}^n_\gamma(X),$$
where $\gamma$ is a partition of $n$, and $\operatorname{Hilb}^n_\gamma(X)$ is the locally closed subscheme of $\operatorname{Hilb}^n(X)$ parametrizing length $n$ subschemes whose support multiplicities are given by $\gamma$. In particular, there are $\gamma_i$ length $i$ clusters in the length $n$ subscheme. 

First consider the case of $\operatorname{Hilb}^n(\mathbb{C}^3)$. Define the relative motivic class of $\operatorname{Hilb}^n(\mathbb{C}^3)$ in $\mathcal{M}_{\operatorname{Hilb}^n(\mathbb{C}^3)}^{\hat{\mu}}$ by:
\begin{equation}
[\operatorname{Hilb}^n(\mathbb{C}^3)]_{relvir}=\mathbb{L}^{-dim(\operatorname{NHilb}^n(\mathbb{C}^3)/2)}\odot MF_{\operatorname{NHilb}^n(\mathbb{C}^3), W}^\phi.
\end{equation} 
Comparing to Theorem \ref{thm1.1}, we see that this is the motivic invariant associated to $(\operatorname{Hilb}^n(\mathbb{C}^3), W)$ without the contribution of a principal $\mathbb{Z}_2$\ccgreen{-bundle} from the orientation. 
Let $[\operatorname{Hilb}^n_\gamma(\mathbb{C}^3)]_{relvir}$ to be the pullback of $[\operatorname{Hilb}^n(\mathbb{C}^3)]_{relvir}$ to $\operatorname{Hilb}^n_\gamma(\mathbb{C}^3)$. On the deepest strata, there is an embedding 
\begin{equation}\label{eq:translation}
\{0\}\times \operatorname{Hilb}^n(\mathbb{C}^3)_0\subset \mathbb{C}^3\times \operatorname{Hilb}^n(\mathbb{C}^3)_0\simeq \operatorname{Hilb}^n_{(n)}(\mathbb{C}^3)
\end{equation}
where $\operatorname{Hilb}^n_{(n)}(\mathbb{C}^3)$ is the punctual Hilbert scheme.   The isomorphism 
in (\ref{eq:translation}) is given by sending $(p,Z)\in \mathbb{C}^3\times \operatorname{Hilb}^n(\mathbb{C}^3)_0$ to the subscheme $Z+p\subset\mathbb{C}^3$ obtained by translating $Z$ by $p$.

Let $[\operatorname{Hilb}^n(\mathbb{C}^3)_0]_{relvir}$ be the pullback of $[\operatorname{Hilb}^n_{(n)}(\mathbb{C}^3)]_{relvir}$ to $\operatorname{Hilb}^n(\mathbb{C}^3)_0$. Also define the absolute motivic classes to be the pushforwards of the corresponding relative motivic classes to a point, and denote them by $[\operatorname{Hilb}^n(\mathbb{C}^3)]$, $[\operatorname{Hilb}^n_\gamma(\mathbb{C}^3)]$ and $[\operatorname{Hilb}^n(\mathbb{C}^3)_0]$ respectively.

Recall the following proposition and definition in \cite{BBS13}:
\begin{proposition} (\cite{BBS13} Proposition 3.6)
\label{prop13}
(1) The absolute motivic classes $[\operatorname{Hilb}^n_\gamma(\mathbb{C}^3)]$ and $[\operatorname{Hilb}^n_\gamma(\mathbb{C}^3)_0]$ live in the subring $\mathcal{M}_\mathbb{C}\subset \mathcal{M}_\mathbb{C}^{\hat{\mu}}$.

(2) On the closed stratum, 
\begin{equation*}
[\operatorname{Hilb}^n_{(n)}(\mathbb{C}^3)]=\mathbb{L}^3\cdot[\operatorname{Hilb}^n(\mathbb{C}^3)_0]\in \mathcal{M}_\mathbb{C}.
\end{equation*}

(3) More generally, for a general stratum, 
\begin{equation*}
[\operatorname{Hilb}^n_\gamma(\mathbb{C}^3)]=\pi_{G_\gamma}([\prod_i(\mathbb{C}^3)^{\gamma_i}\setminus\Delta]\cdot\prod_i[\operatorname{Hilb}^i(\mathbb{C}^3)_0^{\gamma_i}]).
\end{equation*}
where the map $\pi_{G_\gamma}$ is defined by taking the orbit space on generators. 
\end{proposition}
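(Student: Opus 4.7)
The plan is to follow the strategy of \cite{BBS13}, exploiting the existence of a good $\mathbb{C}^*$-scaling action on the noncommutative Hilbert scheme together with the translation structure on $\mathbb{C}^3$.

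For part (1), the key observation is that $NHilb^n(\mathbb{C}^3) = Hom(V_n,V_n)^3\times V_n\sslash GL_n$ carries a $\mathbb{C}^*$-action which scales the triple $(X,Y,Z)$ uniformly while fixing $v$, under which the potential $W=\mathrm{tr}([X,Y]Z)$ is homogeneous of weight $3$. A standard scaling argument (see \cite{BJM19}, or the computations of \cite{BBS13} itself) then shows that the motivic vanishing cycle $MF^{\phi}_{NHilb^n(\mathbb{C}^3),W}$ is monodromy-free, i.e.\ lies in the image of the natural map $\mathcal{M}_{NHilb^n(\mathbb{C}^3)}\to \mathcal{M}^{\hat\mu}_{NHilb^n(\mathbb{C}^3)}$. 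Pulling back to $Hilb^n(\mathbb{C}^3)$ and pushing forward to a point preserves this property, so $[Hilb^n(\mathbb{C}^3)]\in \mathcal{M}_\mathbb{C}$. Since the stratification $Hilb^n(\mathbb{C}^3)=\coprod_\gamma Hilb^n_\gamma(\mathbb{C}^3)$ is by locally closed subschemes, the classes $[Hilb^n_\gamma(\mathbb{C}^3)]$ and $[Hilb^n(\mathbb{C}^3)_0]$ inherit the same property by restriction and the scissor relations.

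For part (2), I would use the translation action of $\mathbb{C}^3$ on itself, which identifies $Hilb^n_{(n)}(\mathbb{C}^3)\simeq \mathbb{C}^3\times Hilb^n(\mathbb{C}^3)_0$ as noted in (\ref{eq:translation}). The point is that the potential $W$ is compatible with this translation in the sense that the d-critical locus structure on $Hilb^n_{(n)}(\mathbb{C}^3)$ is pulled back from the product of the trivial d-critical structure on $\mathbb{C}^3$ with the structure on $Hilb^n(\mathbb{C}^3)_0$. Applying the Thom--Sebastiani-type behavior of the motivic vanishing cycle (together with part (1) to deal with the monodromy), one obtains $[Hilb^n_{(n)}(\mathbb{C}^3)]=[\mathbb{C}^3]\cdot[Hilb^n(\mathbb{C}^3)_0]=\mathbb{L}^3\cdot[Hilb^n(\mathbb{C}^3)_0]$.

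For part (3), I would argue as follows. A length $n$ subscheme $Z\subset\mathbb{C}^3$ of support type $\gamma$ decomposes uniquely as a disjoint union of clusters of lengths prescribed by $\gamma$, and the corresponding ideal factorization gives a surjection from a product of configurations of distinct points $\prod_i ((\mathbb{C}^3)^{\gamma_i}\setminus\Delta)$ times punctual Hilbert schemes $\prod_i Hilb^i(\mathbb{C}^3)_0^{\gamma_i}$ onto $Hilb^n_\gamma(\mathbb{C}^3)$, with fibres given by the symmetry group $G_\gamma=\prod_i S_{\gamma_i}$ permuting identical factors. Since the d-critical locus structure is \'etale-locally a product on each cluster (by (2) applied to each component), the motivic vanishing cycle splits as a product on the cover. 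Descending by the finite quotient map and using the orbit-space operation $\pi_{G_\gamma}$ introduced earlier gives the stated formula.

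The main obstacle is part (1): namely, verifying that the $\mathbb{C}^*$-action really does trivialize the monodromy on $MF^{\phi}_{NHilb^n(\mathbb{C}^3),W}$ despite the GIT quotient construction. Once this foundational point is settled, parts (2) and (3) follow by standard stratification and Thom--Sebastiani arguments that respect the monodromic structure.
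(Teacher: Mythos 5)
This proposition is not proved in the paper at all: it is quoted verbatim from \cite{BBS13} (their Proposition 2.6) as background, so there is no in-paper argument to compare against. Your sketch is essentially a reconstruction of the strategy of the original proof in \cite{BBS13}: monodromy-freeness via a torus action on $NHilb^n(\mathbb{C}^3)$ for (1), translation invariance of $W$ and the product decomposition (\ref{eq:translation}) for (2), and the \'etale-local product structure on clusters plus Thom--Sebastiani and the $G_\gamma$-quotient for (3) --- with the correct observation that once (1) is in hand the convolution product $\odot$ reduces to the ordinary product on $\mathcal{M}_{\mathbb{C}}$. One imprecision worth fixing: the uniform scaling $(X,Y,Z)\mapsto(tX,tY,tZ)$ makes $W$ homogeneous of weight $3$, whereas the criterion in \cite{BBS13} that kills the monodromy requires equivariance with respect to a \emph{primitive} (weight $1$) character; they achieve this by scaling only one of the three matrices, say $Z\mapsto tZ$, and verifying the linearizability hypothesis on the GIT quotient. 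With a weight-$3$ action the nearby fibre a priori carries a nontrivial $\mu_3$-action, so your ``standard scaling argument'' as stated does not immediately give triviality of the monodromy; you correctly identify this as the main obstacle, and the resolution is the weight-$1$ subtorus rather than the diagonal one.
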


\begin{definition}(\cite{BBS13} Definition 4.1)
\label{def4}
We define motivic classes
$[\operatorname{Hilb}^n_\gamma(X)]\in\mathcal{M}_\mathbb{C}$ and $[\operatorname{Hilb}^n(X)]\in\mathcal{M}_\mathbb{C} $
as follows.

(1) on the deepest stratum, 
\begin{equation*}
[\operatorname{Hilb}^n_{(n)}(X)]=[X]\cdot[\operatorname{Hilb}^n(\mathbb{C}^3)_0].
\end{equation*}

(2) More generally, on all strata, 
\begin{equation*}
[\operatorname{Hilb}^n_\gamma(X)]=\pi_{G_\gamma}([\prod_i X^{\gamma_i}\setminus\Delta]\cdot\prod_i[\operatorname{Hilb}^i(\mathbb{C}^3)_0^{\gamma_i}]).
\end{equation*}

(3) Finally
\begin{equation*}
[\operatorname{Hilb}^n(X)]=\sum_\gamma[\operatorname{Hilb}^n_\gamma(X)].
\end{equation*}

\end{definition}

Now consider the case when $X=\omega_S$ for $S=\mathbb{P}^2$ or $\mathbb{F}_n$. Let $[\operatorname{Hilb}^n(\omega_S)]$ be defined as in Definition \ref{def4}.  Then we have
\begin{theorem}
\label{thm3.2}
Given the d-critical locus $(\operatorname{Hilb}^n(\omega_S), s)$ and the choice of orientation 
$$K_{\operatorname{Hilb}(\omega_S),s}^{1/2}= \operatorname{det}(\pi_{1*}F)^*,$$ 
the absolute motivic class of the motive given by Theorem \ref{thm1.1} concides with
$[\operatorname{Hilb}^n(\omega_S)]$.
\end{theorem}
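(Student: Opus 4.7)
The plan is to exploit the local nature of the motive $MF_{Y,s}$ from Theorem~\ref{thm1.1}: since the critical charts on $(Hilb^n(\omega_S),s)$ are all isomorphic to the standard chart $(Hilb^n(\mathbb{C}^3),NHilb^n(\mathbb{C}^3),W,i)$ by Theorem~\ref{thm:dcrit}, I would first show that the orientation data contributes trivially, so that on each chart the BJM motive reduces to the relative motivic class used in \cite{BBS13}, and then I would match the pushforward to a point stratum by stratum against Definition~\ref{def4}.

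\emph{Step 1: Triviality of the $\mathbb{Z}_2$-torsor $Q$.} On a critical chart $(Hilb^n(V_\alpha), NHilb^n(V_\alpha), W_\alpha, i_\alpha)$, the proof of Proposition~\ref{prop8} identifies $K_{N_\alpha}^{\otimes 2}|_{Hilb^n(V_\alpha)}$ with $(\det V_0)^{*\otimes 2}$ via the quiver description of $NHilb^n(V_\alpha)$, and $V_0|_{Hilb^n(V_\alpha)} \simeq \pi_{1*}F$. Thus $(\det V_0)^{*}$ is a canonical square root which coincides with the pullback of $\det(\pi_{1*}F)^*$. The $\mathbb{Z}_2$-bundle $Q_{R,U,f,i}$ of local isomorphisms $\alpha$ with $\alpha^{\otimes 2} = \iota_{R,U,f,i}$ is therefore canonically trivialised on each chart, so $\Upsilon(Q_{R,U,f,i}) = 1$ in $\overline{\mathcal{M}}^{\hat\mu}_R$.

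\emph{Step 2: Reduction to the BBS relative class on each chart.} Combining Step~1 with Theorem~\ref{thm1.1} yields
\begin{equation*}
MF_{Hilb^n(\omega_S),s}|_{Hilb^n(V_\alpha)} = i_\alpha^*\bigl(\mathbb{L}^{-\dim NHilb^n(V_\alpha)/2}\odot MF^{\phi}_{NHilb^n(V_\alpha),W_\alpha}\bigr).
\end{equation*}
Under the isomorphism $V_\alpha\simeq X_{\sigma_\alpha}\simeq\mathbb{C}^3$ of Theorem~\ref{thm:dcrit}, this is precisely the pullback of the relative motivic class $[Hilb^n(\mathbb{C}^3)]_{relvir}$, and the compatibility of the charts (Proposition~\ref{prop:opendcrit}) guarantees that these local pullbacks glue to a single global relative class on $Hilb^n(\omega_S)$.

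\emph{Step 3: Stratum-by-stratum pushforward.} I would now pushforward to a point using the stratification $Hilb^n(\omega_S)=\coprod_\gamma Hilb^n_\gamma(\omega_S)$. Near a point $Z\in Hilb^n_\gamma(\omega_S)$ with supports $p_1,\ldots,p_k$, choose $V_\alpha$ containing all $p_j$; then locally the Hilbert scheme factors as the product of the punctual Hilbert schemes at each $p_j$ together with the configuration of supports, compatibly with the $\mathbb{C}^3$-factor from (\ref{eq:translation}). Since the motivic class is local and Step~2 identifies it on $V_\alpha$ with the $\mathbb{C}^3$ class, Proposition~\ref{prop13}(2) gives on the deepest stratum
\begin{equation*}
\pi_*\bigl([Hilb^n_{(n)}(\omega_S)]_{MF}\bigr) = [\omega_S]\cdot[Hilb^n(\mathbb{C}^3)_0],
\end{equation*}
and Proposition~\ref{prop13}(3) together with the $S_n$-equivariant product structure yields the formula of Definition~\ref{def4} on each general stratum. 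Summing over $\gamma$ gives $[Hilb^n(\omega_S)]$.

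\emph{Main obstacle.} The principal difficulty will be carefully justifying the third step: one has to check that the étale/Zariski local product decomposition of $Hilb^n(\omega_S)$ around a general point of a stratum $Hilb^n_\gamma(\omega_S)$ transports the critical chart data (and hence the BJM motive) to a tensor/symmetrised product of critical charts for punctual Hilbert schemes on $\mathbb{C}^3$, in a way compatible with the orientation $\det(\pi_{1*}F)^*$. Equivalently, one must verify that the orientation is also multiplicative under this local product decomposition, so that no extra $\mathbb{Z}_2$-twist appears when comparing with the ring-theoretic product construction in Proposition~\ref{prop13}(3).
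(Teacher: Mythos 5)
Your proposal is correct and follows essentially the same route as the paper: trivialize $\Upsilon(Q)$ using the canonical isomorphism from the proof of Proposition~\ref{prop8}, identify the restriction of the BJM motive to each chart with the BBS relative class, and then match stratum by stratum against Definition~\ref{def4} using translation invariance of $W$ and Proposition~\ref{prop13}. The only point worth adding is the paper's explicit appeal to Proposition~\ref{prop13}(1) to place the motive in the subring $\mathcal{M}_{Hilb^n(\omega_S)}$, which is what licenses the stratification sum and the ordinary (non-convolution) products in your Step~3.
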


\begin{proof}
Since we are given a d-critical locus structure with a choice of orientation, by Theorem \ref{thm1.1} there is a unique globally well defined motivic DT invariant $MF_{\operatorname{Hilb}^n{\omega_S}, s}$.  We continue our practice of using a generic index $\gamma$ to be common shorthand for the indexing of either $\mathbb{P}^2$ or $\mathbb{F}_n$.  Furthermore, for any critical chart $(R_\alpha, U_\alpha, f_\alpha, i_\alpha)=(\operatorname{Hilb}^n(V_\alpha), N_\alpha, W_\alpha, i_\alpha)$ this invariant satisfies:
\begin{equation*}
MF_{\operatorname{Hilb}^n(\omega_S),s}|_{\operatorname{Hilb}^n(V_\alpha)}=i_\alpha^*(\mathbb{L}^{-dim{N_\alpha}/2}\odot MF^\phi_{N_\alpha,W_\alpha})\odot\Upsilon(Q_{\operatorname{Hilb}^n(V_\alpha), N_\alpha, W_\alpha, i_\alpha})\in\overline{\mathcal{M}}_{\operatorname{Hilb}^n(V_\alpha)}^{\hat{\mu}}.
\end{equation*} 

\ccgreen{By the construction of $K_{\operatorname{Hilb}^n(\omega_S), s}$, we have 
\begin{equation*}
	\iota_{\operatorname{Hilb}^n(V_\alpha), N_\alpha, i_\alpha, W_\alpha}=\tau_{\operatorname{Hilb}^n(V_\alpha), N_\alpha, i_\alpha, W_\alpha}^2: K_{(\omega_S), s}|_{\operatorname{Hilb}^n(V_\alpha)^{red}}\rightarrow K_{N_\alpha}^2|_{\operatorname{Hilb}^n(V_\alpha)^{red}},
\end{equation*}
where $\tau_{\operatorname{Hilb}^n(V_\alpha), N_\alpha, i_\alpha, W_\alpha}$ is the canonical isomorphism
(\ref{eq:localor}).
Hence the map 
\begin{equation*}\alpha_{\operatorname{Hilb}^n(V_\alpha)}: K^{1/2}_{\operatorname{Hilb}^n(\omega), s}|_{\operatorname{Hilb}^n(V_\alpha)^{red}}\rightarrow i^*_{\alpha}(K_{N_\alpha})|_{\operatorname{Hilb}^n(V_\alpha)^{red}}
\end{equation*} 
in Theorem $\ref{thm1.1}$ is identified with $\tau_{\operatorname{Hilb}^n(V_\alpha), N_\alpha, i_\alpha, W_\alpha}$.
It follows that} $Q_{\operatorname{Hilb}^n(V_\alpha), N_\alpha, W_\alpha, i_\alpha}$ and therefore $\Upsilon(Q_{\operatorname{Hilb}^n(V_\alpha), N_\alpha, W_\alpha, i_\alpha})$ is trivial for each $\alpha$, implying
\begin{equation*}
\label{equMF}
MF_{\operatorname{Hilb}^n(\omega_S),s}|_{\operatorname{Hilb}^n(V_\alpha)}=[\operatorname{Hilb}^n(V_\alpha)]_{relvir}.
\end{equation*}
By Proposition \ref{prop13}, $[\operatorname{Hilb}^n(V_\alpha)]_{relvir}\in \mathcal{M}_{\operatorname{Hilb}^n(V_\alpha)}$. 
Since the $\operatorname{Hilb}^n(V_\alpha)$ cover $\operatorname{Hilb}^n(\omega_S)$, 
it follows that $MF_{\operatorname{Hilb}^n(\omega_S),s}\in \mathcal{M}_{\operatorname{Hilb}^n(\omega_S)}$.   

Hence we have 
\begin{equation}
\label{decom}
MF_{\operatorname{Hilb}^n(\omega_S),s}=\sum_\gamma MF_{\operatorname{Hilb}^n(\omega_S),s}|_{\operatorname{Hilb}^n_\gamma(\omega_S)}.
\end{equation}

Since 
\begin{equation*}
\operatorname{Hilb}^n_{(n)}(V_\alpha)\simeq V_\alpha\times \operatorname{Hilb}^n(\mathbb{C}^3)_0
\end{equation*}
and $W$ is translation invariant in the sense that
\begin{equation*}
\operatorname{tr}([X,Y]Z)=\operatorname{tr}([X+xId,Y+yId](Z+zId))
\end{equation*}
for any $(x,y,z)\in \mathbb{C}^3$, we see that
\begin{equation*}
[\operatorname{Hilb}^n_{(n)}(V_\alpha)]_{relvir}=p_2^*[\operatorname{Hilb}^n(\mathbb{C}^3)_0]_{relvir},
\end{equation*}
where $p_2:V_\alpha\times \operatorname{Hilb}^n(\mathbb{C}^3)_0\to \operatorname{Hilb}^n(\mathbb{C}^3)_0$ is the projection to the second factor. 
Then on the deepest stratum, we have
\begin{equation*}
\label{equn}
MF_{\operatorname{Hilb}^n(\omega_S),s}|_{\operatorname{Hilb}^n_{(n)}(V_\alpha)}=[\operatorname{Hilb}^n_{(n)}(V_\alpha)]_{relvir}=[V_\alpha\times \operatorname{Hilb}^n(\mathbb{C}^3)_0]\cdot_{\operatorname{Hilb}^n(\mathbb{C}^3)_0}[\operatorname{Hilb}^n(\mathbb{C}^3)_0]_{relvir}.
\end{equation*}
We use the subscript to indicate the scheme over which the product of relative motivic classes takes place. Since the motivic invariant $MF_{\operatorname{Hilb}^n(\omega_S),s}$ is uniquely determined by its restriction to the $\operatorname{Hilb}^n(V_\alpha)$, we have 
\begin{equation*}
MF_{\operatorname{Hilb}^n(\omega_S),s}|_{\operatorname{Hilb}^n_{(n)}(\omega_S)}=[\omega_S\times \operatorname{Hilb}^n(\mathbb{C}^3)_0]\cdot_{\operatorname{Hilb}^n(\mathbb{C}^3)_0}[\operatorname{Hilb}^n(\mathbb{C}^3)_0]_{relvir}.
\end{equation*}
Now consider a general stratum. Let $Y_\gamma\subset\prod_i\operatorname{Hilb}^i_{(i)}(\mathbb{C}^3)^{\gamma_i}$ be the open subset on which the clusters have distinct supports.
We have
\begin{equation*}
\label{equa}
\begin{split}
MF_{\operatorname{Hilb}^n(\omega_S),s}|_{\operatorname{Hilb}^n_\gamma(V_\alpha)}=&[\operatorname{Hilb}^n_\gamma(V_\alpha)]_{relvir}\\ 
=&\pi_{G_\gamma}(\prod_i[\operatorname{Hilb}^i_{(i)}(\mathbb{C}^3)^{\gamma_i}]_{relvir}|_{Y_\gamma}).
\end{split}
\end{equation*}
We also have a fiber product expression
\begin{equation*}
\prod_i[\operatorname{Hilb}^i_{(i)}(\mathbb{C}^3)^{\gamma_i}]_{relvir}=\prod_i[\left(\mathbb{C}^3\times \operatorname{Hilb}^i(\mathbb{C}^3)_0\right)^{\gamma_i}]\cdot_{\operatorname{Hilb}^n_{\gamma,0}(\mathbb{C}^3)}[(\operatorname{Hilb}^n_{\gamma,0}(\mathbb{C}^3)]_{relvir}.
\end{equation*}
where we have put $\operatorname{Hilb}^n_{\gamma,0}(\mathbb{C}^3)=\prod_i \left(\operatorname{Hilb}^i(\mathbb{C}^3)_0\right)^{\gamma_i}$.

Then we get
\begin{equation}\label{eq:mfstrata}
MF_{\operatorname{Hilb}^n(\omega_S),s}|_{\operatorname{Hilb}^n_\gamma(\omega_S)}=\pi_{G_\gamma}\left(\left[\left(\prod_i \omega_S^{\gamma_i}-\Delta\right)\times ^n_{\gamma,0}(\mathbb{C}^3)\right]\cdot[(\operatorname{Hilb}^n_{\gamma,0}(\mathbb{C}^3)]_{relvir}\right),
\end{equation}
where $\Delta\subset\prod_i \omega_S^{\gamma_i}$ is the big diagonal.

Taking the absolute motivic class of (\ref{decom}) and using (\ref{eq:mfstrata}), we see that the absolute motive of $MF_{\operatorname{Hilb}^n(\omega_S),s}$ matches the motivic class of \cite{BBS13} from Definition \ref{def4}, completing the proof.
\end{proof}

These absolute motives can be combined into a generating function
\begin{equation*}
Z_{\omega_S}(t)=\sum_{n=0}^\infty MF_{\operatorname{Hilb}^n(\omega_S),s}t^n.
\end{equation*}
Let $\mathrm{Exp}$ denote the plethystic exponential.

\begin{corollary}
\label{cor:toricBBS}
$$Z_{\omega_S}(t)=
\left\{
\begin{array}{cc}
\mathrm{Exp}\left(
\frac{\mathbb{L}^{-1/2}(\mathbb{L}^2+\mathbb{L}+1)t}{\left(1-\mathbb{L}^{1/2}t\right)\left(1-\mathbb{L}^{-1/2}t\right)}
\right)& S=\mathbb{P}^2\\
\mathrm{Exp}\left(
\frac{\mathbb{L}^{-1/2}(\mathbb{L}^2+2\mathbb{L}+1)t}{\left(1-\mathbb{L}^{1/2}t\right)\left(1-\mathbb{L}^{-1/2}t\right)}
\right)& S=\mathbb{F}_n.
\end{array}
\right. 
$$

\begin{proof}
Follows immediately from \cite[Theorem 4.3]{BBS13}, Theorem~\ref{thm3.2},  $[\omega_S]=\mathbb{L}[S]$ for any $S$, $[\mathbb{P}^2]=\mathbb{L}^2+\mathbb{L}+1$, and $[\mathbb{F}_n]=\mathbb{L}^2+2\mathbb{L}+1$.
\end{proof}
\end{corollary}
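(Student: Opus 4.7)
The plan is to assemble the formula directly from three inputs that are essentially in hand: Theorem~\ref{thm3.2}, which identifies the absolute motive $MF_{Hilb^n(\omega_S),s}$ with the class $[Hilb^n(\omega_S)]$ of \cite{BBS13}; the generating function formula in \cite[Theorem 4.3]{BBS13}, which gives the plethystic exponential expression for $\sum_n [Hilb^n(X)]\, t^n$ for any quasiprojective Calabi--Yau threefold $X$ in terms of the motivic class $[X]\in\mathcal{M}_\mathbb{C}$; and the elementary toric motive computations.

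First, I would apply Theorem~\ref{thm3.2} termwise in the generating function $Z_{\omega_S}(t)$ to replace each $MF_{Hilb^n(\omega_S),s}$ by $[Hilb^n(\omega_S)]$ in $\mathcal{M}_\mathbb{C}$. This immediately turns $Z_{\omega_S}(t)$ into the generating series of Definition~\ref{def4} applied to $X=\omega_S$, and reduces the problem to plugging into \cite[Theorem 4.3]{BBS13}. That theorem states that
\begin{equation*}
\sum_{n\ge 0}[Hilb^n(X)]\, t^n = \mathrm{Exp}\!\left(
\frac{\mathbb{L}^{-1/2}[X]\,t}{\left(1-\mathbb{L}^{1/2}t\right)\left(1-\mathbb{L}^{-1/2}t\right)}
\right).
\end{equation*}

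Second, I would compute $[\omega_S]$. Since $\omega_S\to S$ is a line bundle, it is Zariski-locally trivial on $S$ (indeed it is even trivial over each affine toric chart $X_\sigma$ by construction, as the variable $p$ is a free coordinate there). Applying the scissor relation over a trivializing open cover yields $[\omega_S]=\mathbb{L}\cdot[S]$. Next, $[\mathbb{P}^2]=\mathbb{L}^2+\mathbb{L}+1$ by the standard affine stratification, and $[\mathbb{F}_n]=\mathbb{L}^2+2\mathbb{L}+1$ either from the $\mathbb{P}^1$-bundle structure (giving $[\mathbb{F}_n]=[\mathbb{P}^1]^2=(\mathbb{L}+1)^2$) or from its four-cell toric decomposition.

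Substituting these into the Behrend--Bryan--Szendr\H{o}i formula produces the two stated expressions for $S=\mathbb{P}^2$ and $S=\mathbb{F}_n$. There is no serious obstacle: Theorem~\ref{thm3.2} is the substantive input, and the remaining work is purely bookkeeping on motivic classes.
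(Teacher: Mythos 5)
Your proposal is correct and follows exactly the same route as the paper's proof, which simply cites \cite[Theorem 4.3]{BBS13}, Theorem~\ref{thm3.2}, and the motivic classes $[\omega_S]=\mathbb{L}[S]$, $[\mathbb{P}^2]=\mathbb{L}^2+\mathbb{L}+1$, $[\mathbb{F}_n]=\mathbb{L}^2+2\mathbb{L}+1$. The extra detail you supply (termwise substitution via Theorem~\ref{thm3.2}, the Zariski-local triviality of the line bundle for the scissor relation, and the cell decompositions of $\mathbb{P}^2$ and $\mathbb{F}_n$) is just an expansion of the same bookkeeping.
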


\bigskip\noindent
{\bf Remark.} We expect that our methods are adaptable to other local toric Calabi-Yaus.  We leave this investigation for future work.

\section*{Acknowledgements}
We thank Ben Davison for helpful discussions on this subject and motivic DT theory in general and Tony Pantev for helpful conversations about $-1$-shifted symplectic structures. We are also grateful for Bal\'azs Szendr\H{o}i for helpful comments on a previous version of this paper. The research of the first author is supported in part by NSF grants DMS--1802242 and DMS--2201203, as well as by NSF grant DMS-1440140 while in residence at MSRI in Spring, 2018. Both authors would like to thank MSRI for the excellent working environment.  The second author is grateful for the mini course given by Kai Behrend during the program Enumerative Geometry beyond Numbers at MSRI, where she first learned this problem. Part of the work was done while the second author was a postdoc at CMSA, Harvard.  She would like to thank CMSA for the excellent working environment.   

\bibliographystyle{mrl}
\bibliography{References}

\begin{thebibliography}{10}

\bibitem{Beh09}
K.~Behrend, \emph{Donaldson-{T}homas type invariants via microlocal geometry},
  Ann. of Math. (2) \textbf{170} (2009), no.~3,  1307--1338.

\bibitem{BBS13}
K.~Behrend, J.~Bryan, and B.~Szendr\H{o}i, \emph{Motivic degree zero
  {D}onaldson-{T}homas invariants}, Invent. Math. \textbf{192} (2013), no.~1,
  111--160.

\bibitem{BF97}
K.~Behrend and B.~Fantechi, \emph{The intrinsic normal cone}, Invent. Math.
  \textbf{128} (1997), no.~1,  45--88.

\bibitem{BF08}
---{}---{}---, \emph{Symmetric obstruction theories and {H}ilbert schemes of
  points on threefolds}, Algebra Number Theory \textbf{2} (2008), no.~3,
  313--345.

\bibitem{BD19}
C.~Brav and T.~Dyckerhoff, \emph{Relative {C}alabi-{Y}au structures {II}:
  shifted {L}agrangians in the moduli of objects}, Selecta Math. (N.S.)
  \textbf{27} (2021), no.~4,  Paper No. 63, 45.

\bibitem{BJM19}
V.~Bussi, D.~Joyce, and S.~Meinhardt, \emph{On motivic vanishing cycles of
  critical loci}, J. Algebraic Geom. \textbf{28} (2019), no.~3,  405--438.

\bibitem{CLS11}
D.~A. Cox, J.~B. Little, and H.~K. Schenck, Toric varieties, Vol. 124 of
  \emph{Graduate Studies in Mathematics}, American Mathematical Society,
  Providence, RI (2011), ISBN 978-0-8218-4819-7.

\bibitem{Dav}
B.~Davison, \emph{Invariance of orientation data for ind-constructible
  Calabi-Yau $A_\infty$ categories under derived equivalence}. Preprint,
  arXiv:1006.5475.

\bibitem{DF99}
J.~Denef and F.~Loeser, \emph{Motivic exponential integrals and a motivic
  {T}hom-{S}ebastiani theorem}, Duke Math. J. \textbf{99} (1999), no.~2,
  285--309.

\bibitem{DF00}
---{}---{}---, \emph{Geometry on arc spaces of algebraic varieties}, in
  European {C}ongress of {M}athematics, {V}ol. {I} ({B}arcelona, 2000), Vol.
  201 of \emph{Progr. Math.}, 327--348, Birkh\"{a}user, Basel (2001), ISBN
  3-7643-6417-3.

\bibitem{Joy15}
D.~Joyce, \emph{A classical model for derived critical loci}, J. Differential
  Geom. \textbf{101} (2015), no.~2,  289--367.

\bibitem{KS22}
S.~Katz and Y.~Shi, \emph{{$D$}-critical loci for length {$n$} sheaves on local
  toric {C}alabi-{Y}au 3-folds}, Bull. Lond. Math. Soc. \textbf{54} (2022),
  no.~6,  2101--2116.

\bibitem{KS}
M.~Kontsevich and Y.~Soibelman, \emph{Stability structures, motivic
  Donaldson-Thomas invariants and cluster transformations}. Preprint,
  arXiv:0811.2435.

\bibitem{LP09}
M.~Levine and R.~Pandharipande, \emph{Algebraic cobordism revisited}, Invent.
  Math. \textbf{176} (2009), no.~1,  63--130.

\bibitem{Loo02}
E.~Looijenga, \emph{Motivic measures}, 276, 267--297 (2002). S\'{e}minaire
  Bourbaki, Vol. 1999/2000.

\bibitem{MNOP06}
D.~Maulik, N.~Nekrasov, A.~Okounkov, and R.~Pandharipande,
  \emph{Gromov-{W}itten theory and {D}onaldson-{T}homas theory. {I}}, Compos.
  Math. \textbf{142} (2006), no.~5,  1263--1285.

\bibitem{MMNS12}
A.~Morrison, S.~Mozgovoy, K.~Nagao, and B.~Szendr\H{o}i, \emph{Motivic
  {D}onaldson-{T}homas invariants of the conifold and the refined topological
  vertex}, Adv. Math. \textbf{230} (2012), no. 4-6,  2065--2093.

\bibitem{Nag}
K.~Nagao, \emph{Wall-crossing of the motivic {D}onaldson-{T}homas invariants}.
  Preprint, arXiv:1103.2922.

\bibitem{PTVV13}
T.~Pantev, B.~To\"{e}n, M.~Vaqui\'{e}, and G.~Vezzosi, \emph{Shifted symplectic
  structures}, Publ. Math. Inst. Hautes \'{E}tudes Sci. \textbf{117} (2013)
  271--328.

\bibitem{PP98}
A.~Parusi\'{n}ski and P.~Pragacz, \emph{Characteristic classes of hypersurfaces
  and characteristic cycles}, J. Algebraic Geom. \textbf{10} (2001), no.~1,
  63--79.

\bibitem{Shi18}
Y.~Shi, \emph{Orientation data for local $\mathbb{P}^2$}. ArXiv:1809.01776, to
  appear in Math. Res. Lett.

\bibitem{Tho00}
R.~P. Thomas, \emph{A holomorphic {C}asson invariant for {C}alabi-{Y}au
  3-folds, and bundles on {$K3$} fibrations}, J. Differential Geom. \textbf{54}
  (2000), no.~2,  367--438.

\end{thebibliography}

\end{document}